\documentclass[12pt]{amsart}

\usepackage{amsmath, amssymb}
\usepackage{color}

\hoffset=-0.5in
\textwidth=6in

\newtheorem{theorem}{Theorem}[section]
\newtheorem{lemma}[theorem]{Lemma}
\newtheorem{prop}[theorem]{Proposition}

\theoremstyle{definition}

\newtheorem{example}[theorem]{Example}

\theoremstyle{remark}
\newtheorem{remark}[theorem]{Remark}

\numberwithin{equation}{section}

\newcommand{\rr}{{\mathbb R}}
\newcommand{\CC}{\mathbb{C}}
\newcommand{\RR}{\mathbb{R}}
\newcommand{\rd}{{\mathbb R^d}}

\newcommand{\Ker}{\operatorname{Ker}}
\newcommand{\Spec}{\operatorname{Spec}}
\newcommand{\RSpec}{\operatorname{ReSpec}}
\renewcommand{\Re}{\operatorname{Re}}

\newcommand{\dd}{\mathrm{d}}
\newcommand{\ee}{\mathrm{e}}
\newcommand{\cI}{{\mathcal I}}
\newcommand{\cF}{{\mathcal F}}

\newcommand{\EE}{\operatorname{\mathbb{E}}}

\allowdisplaybreaks

\begin{document}
\sloppy
\title[Operator scaled Wiener bridges]{Operator scaled Wiener bridges}

\author{M\'aty\'as Barczy}
\address{M\'aty\'as Barczy: Faculty of Informatics, University of Debrecen, Pf.12, H--4010 Debrecen, Hungary}
\email{barczy.matyas\@@{}inf.unideb.hu}
\thanks{The research of M. Barczy was realized in the frames of
 T\'AMOP 4.2.4.\ A/2-11-1-2012-0001 ,,National Excellence Program --
 Elaborating and operating an inland student and researcher personal support
 system''.
The project was subsidized by the European Union and co-financed by the
 European Social Fund.}

\author{Peter Kern}
\address{Peter Kern, Mathematisches Institut, Heinrich-Heine-Universit\"at D\"usseldorf, Universit\"atsstr. 1, D--40225 D\"usseldorf, Germany}
\email{kern\@@{}math.uni-duesseldorf.de}

\author{Vincent Krause}
\address{Vincent Krause, Keltenstr. 19, D--41462 Neuss, Germany}
\email{vincentkrause\@@{}arcor.de}

\date{\today}

\begin{abstract}
We introduce operator scaled Wiener bridges by incorporating a matrix scaling in the drift part of
 the SDE of a multidimensional Wiener bridge.
A sufficient condition for the bridge property of the SDE solution is derived in terms of the eigenvalues of the scaling matrix.
We analyze the asymptotic behavior of the bridges and briefly discuss the question whether the scaling matrix determines uniquely
 the law of the corresponding bridge.
\end{abstract}

\keywords{multidimensional Wiener bridge, operator scaling, strong law of large numbers, asymptotic behavior}

\subjclass[2010]{Primary 60G15; Secondary 60F15, 60G17, 60J60}

\maketitle

\baselineskip=18pt

\section{Introduction}

This paper deals with a multidimensional generalization of the so-called $\alpha$-Wiener bridges also known as scaled Wiener bridges.
For fixed $T>0$ and given matrices $A\in\rr^{d\times d}$ and $\Sigma\in\rr^{d\times m}$,
 a $d$-dimensional process $(X_t)_{t\in[0,T)}$ is given by the SDE
\begin{equation}\label{msde}
 \dd X_t=-\,\frac{1}{T-t}AX_t\,\dd t+\Sigma\,\dd B_t, \quad t\in[0,T),
\end{equation}
with initial condition $X_0=0\in\rd$, where $(B_t)_{t\in[0,T)}$ is an $m$-dimensional standard Wiener process defined
 on the filtered probability space $(\Omega,\cF,(\mathcal F_t)_{t\in[0,T)},P)$ with the completion $(\mathcal F_t)_{t\in[0,T)}$ of the canonical filtration of $(B_t)_{t\in[0,T)}$. 
Note that in case $m=d$ and if $A$ and $\Sigma$ are both the $d\times d$ identity matrix, then the process $(X)_{t\in[0,T)}$ is nothing else but the
 usual $d$-dimensional Wiener bridge over $[0,T]$.

To our knowledge, in case of dimension $d=1$, these kinds of processes have been first considered
 by Brennan and Schwartz \cite{BreSch}; see also Mansuy \cite{Man}.
In Brennan and Schwartz \cite{BreSch} $\alpha$-Wiener bridges, where $A=\alpha\in\RR^{1\times 1}$ with $\alpha>0$,
 are used to model the arbitrage profit associated with a given futures contract in the absence
 of transaction costs.
This model is also meaningful in a multidimensional context when a finite number of contracts is considered with possible
 dependencies between the contracts.
Operator scaled Wiener bridges offer a tool for modeling the arbitrage profit in this multidimensional setting.

Sondermann, Trede and Wilfling  \cite{SonTreWil} and Trede and Wilfling \cite{TreWil}
used $\alpha$-Wiener bridges with $\alpha>0$ to describe the fundamental component
 of an exchange rate process and they call the process a scaled Brownian bridge.
The essence of these models is that the coefficient $-\alpha/(T-t)$ of $X_t$ in the drift term in \eqref{msde} represents some kind of mean
 reversion, a stabilizing force that keeps pulling the process towards its mean $0$,
  and the absolute value of this force is increasing proportionally to the inverse of the remaining time $T-t,$
 with the constant rate $\alpha$.
This model is used in \cite{TreWil} to analyze the exchange rate of the Greek drachma to the Euro before the Greek EMU entrance on 1 January 2001 with a priorly fixed conversion rate. Trede and Wilfling \cite{TreWil} observe an increase in interventions towards the fixed conversion rate, well described by an $\alpha$-Wiener bridge plus deterministic drift with MLE-estimator $\widehat\alpha=1.24$. If more than two countries join the EMU at the same time, most recently Cyprus and Malta on 1 January 2008, operator scaled Wiener bridges may offer a useful tool to analyze interventions for all the exchange rates, commonly. In this context the replacement of a constant rate $\alpha$ by some scaling matrix $A$ is meaningful, since the economies of EU countries are tightly linked and thus interventions are likely to be strongly dependent on each other.

The SDE \eqref{msde} with initial condition $X_0=0$ has a unique strong solution $(X_t)_{t\in[0,T)}$ given by the
 $d$-dimensional integral representation
\begin{equation}\label{mintrep}
X_t=\int_0^t\Big(\frac{T-t}{T-s}\Big)^A\Sigma\,\dd B_s\quad\text{ for }t\in[0,T),
\end{equation}
where $r^A$ is defined by the exponential operator
 \begin{align}\label{help1}
 r^A=\ee^{A\log r}=\sum_{k=0}^\infty\frac{(\log r)^k}{k!}\,A^k\quad\text{ for }r>0.
 \end{align}
The validity of \eqref{mintrep} can be easily checked using It\^{o}'s formula and properties of the exponential operator. Indeed,
 \begin{align*}
 \dd X_t & = \left( \left(\frac{\dd}{\dd t} (T-t)^A\right)
                   \int_0^t (T-s)^{-A}\Sigma \,\dd B_s
             \right)\dd t
             + (T-t)^A (T-t)^{-A}\Sigma\,\dd B_t \\
         & = \left( \left(-A(T-t)^{A-I_d} \right)
                   \int_0^t (T-s)^{-A}\Sigma \,\dd B_s
             \right)\dd t
              + \Sigma\,\dd B_t \\
         & = -\frac{1}{T-t}AX_t\,\dd t + \Sigma\,\dd B_t,
             \quad t\in[0,T),
 \end{align*}
 where $I_d$ denotes the $d\times d$ identity matrix.
Further, by Section 5.6 in Karatzas and Shreve \cite{KS}, strong uniqueness holds for the SDE \eqref{msde}.
Note also that $(X_t)_{t\in[0,T)}$ is a Gauss process with almost surely continuous sample paths,
 see, e.g., Problem 5.6.2 in Karatzas and Shreve \cite{KS}.
Later on, we will frequently assume that $\Sigma$ has rank $d$ (and consequently $m\geq d$), but the assumption will always be stated explicitly. Note that this is only a minor restriction, since otherwise the $d$-dimensional Gaussian driving process $(\Sigma B_t)_{t\in[0,T)}$ in \eqref{mintrep} has linearly dependent coordinates.

The paper is organized as follows.
In Section \ref{Section_preliminaries} we recall a spectral decomposition of the matrix $A$ and of the process $X$, respectively.
We further present a result on the growth behavior of the exponential operator $t^A$ near the origin, and
 we also recall a strong law of large numbers and a law of the iterated logarithm valid for the martingale
 $((T-t)^{-A}X_t)_{t\in[0,T)}$.
In Section \ref{Section_bridge_property}, in order to properly speak of a process bridge, we derive
 some sufficient conditions on $A$ and $\Sigma$
 such that $X_t$ converges to the origin almost surely as $t\uparrow T$, see Theorem \ref{Theorem1}.
Provided that the conditions of Theorem \ref{Theorem1} hold, we will call the process $(X_t)_{t\in[0,T]}$ an
{\it operator scaled Wiener bridge} associated to the matrices $A$ and $\Sigma$ over the time interval $[0,T]$.
By giving an example, we point out that if the conditions of
 Theorem \ref{Theorem1} do not hold, then in general one cannot expect that $X_t$ converges to some
 deterministic $d$-dimensional vector almost surely as $t\uparrow T$.
Section \ref{Section_asymptotic} is devoted to study the asymptotic behavior of the sample paths of operator
 scaled Wiener bridges as $t\uparrow T$.
Finally, in Section \ref{Section_uniqueness} we address the question of uniqueness of bridges.
By giving examples, we point out that there exist matrices $A$, $\widetilde A\in\RR^{d\times d}$ and $\Sigma\in\RR^{d\times m}$
 such that the laws of the bridges associated to the matrices $A$ and $\Sigma$, and $\widetilde A$ and $\Sigma$
 coincide, but $A\ne\widetilde A$.
We also formulate a partial result on the uniqueness of bridges in terms of the spectrum of $A$, see Proposition \ref{Prop_uniqueness}.

\section{Preliminaries}\label{Section_preliminaries}

\subsection{Spectral decomposition}\label{sebsection_spectral}

Factor the minimal polynomial $f$ of $A$ into $f(\lambda)=f_{1}(\lambda)\cdots f_{p}(\lambda)$, $\lambda\in\CC$,
 with $p\leq d$ such that every root of $f_{j}$ has real part $a_{j}$, where $a_1<\cdots<a_p$ denote the distinct
 real parts of the eigenvalues of $A$.
Note that $f$, $f_1,\ldots,f_p$ are polynomials with real coefficients.
According to the primary decomposition theorem of linear algebra we can decompose $\rd$ into a direct sum $\rd=V_{1}\oplus\cdots\oplus V_{p}$, where each $V_{j}:=\Ker(f_{j}(A))$ is an $A$-invariant subspace.
Let us denote the dimension of $V_j$ by $d_j$, $j=1,\ldots,p$.
Now, in an appropriate basis, say $\{b_i^{(j)}: i=1,\ldots, d_j,\, j=1,\ldots,p\}$,
 $A$ can be represented as a block-diagonal matrix $A=A_{1}\oplus\cdots\oplus A_{p}$, where
 every eigenvalue of $A_{j}$ has real part $a_{j}$.
For this reason, we will call each matrix $A_j$ {\it real spectrally simple}, i.e., all its eigenvalues have the same real part.
We can further choose a unique inner product $\langle \cdot,\cdot\rangle$ on $\rd$ such that
 the basis $\{b_i^{(j)} : i=1,\ldots, d_j,\, j=1,\ldots,p\}$ is orthonormal, and consequently, the
 subspaces $V_j$, $1\leq j\leq p$, are mutually orthogonal.
For $x=x_1+\cdots+x_p$ with $x_j\in V_j$, $j=1,\ldots,p$, let $\pi_j(x)$ be the coordinates of $x_j$ with respect to
 the basis $\{b_i^{(j)}: i=1,\ldots,d_j\}$ of $V_j$.
Then $\pi_j : \RR^d \to \RR^{d_j}$ is a linear projection mapping.
To conclude, for every $x\in\RR^d$ there exist unique $x_j\in V_j$, $j=1,\ldots,p$, such that $x=x_1+\cdots+x_p=(\pi_1(x),\ldots,\pi_p(x))$
 and $t^Ax=(t^{A_1}\pi_1(x),\ldots,t^{A_p}\pi_p(x))$ for all $t>0$.
This later fact is a consequence of $t^A = t^{A_1}\oplus\cdots\oplus t^{A_p}$ which can be easily checked using \eqref{help1}.
Moreover, for our multidimensional process we have $X_t=(X^{[1]}_t,\ldots,X^{[p]}_t)$,
 where $(X^{[j]}_t =\pi_j(X_t))_{t\in[0,T)}$ is again of the same structure \eqref{mintrep} which will be shown below
 in Lemma \ref{lemma_spectral1}.
Thus it suffices to show that for each component $X^{[j]}_t\to0\in\RR^{d_j}$ almost surely to deduce $X_t\to0\in\RR^d$
 almost surely as $t\uparrow T$.%

\begin{lemma}\label{lemma_spectral1}
For every $j=1,\ldots,p$, the $j$-th spectral component of $(X_t)_{t\in[0,T)}$ can almost surely be represented as
 \begin{align}\label{help4}
  X_t^{[j]}=\int_0^t\Big(\frac{T-t}{T-s}\Big)^{A_j}\Sigma_j\,\dd B_s\quad\text{ for }t\in[0,T),
 \end{align}
 where $\Sigma_j\in\RR^{d_j\times m}$ is given by $\pi_j(\Sigma y) = \Sigma_j y$ for $y\in\RR^m$.
\end{lemma}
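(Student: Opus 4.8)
The plan is to push the linear projection $\pi_j$ through the integral representation \eqref{mintrep}, exploiting that $\pi_j$ intertwines the full exponential operator with its $j$-th block. First I would rewrite the integrand by means of the factorization $\big(\frac{T-t}{T-s}\big)^A=(T-t)^A(T-s)^{-A}$, which is immediate from \eqref{help1} since $(T-t)^A$ and $(T-s)^{-A}$ are power series in the single matrix $A$ and hence commute. This lets me pull the $t$-dependent factor $(T-t)^A$ out of the stochastic integral and write $X_t=(T-t)^A\int_0^t(T-s)^{-A}\Sigma\,\dd B_s$.

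Next I would apply $\pi_j$ to both sides. Since $\pi_j:\rd\to\RR^{d_j}$ is a fixed deterministic linear map, it may be interchanged with the It\^o integral, and the only structural input is the intertwining relation $\pi_j(r^Ax)=r^{A_j}\pi_j(x)$ for all $x\in\rd$ and $r>0$, already recorded in Subsection \ref{sebsection_spectral} as a consequence of $r^A=r^{A_1}\oplus\cdots\oplus r^{A_p}$. Applying this relation to the outer factor gives $X_t^{[j]}=(T-t)^{A_j}\,\pi_j\big(\int_0^t(T-s)^{-A}\Sigma\,\dd B_s\big)$, and moving $\pi_j$ inside the integral reduces the problem to identifying the $j$-th projection of the integrand $(T-s)^{-A}\Sigma$.

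For the integrand I would invoke the same intertwining relation once more, now with $r=(T-s)^{-1}>0$: for each $y\in\RR^m$ one has $\pi_j\big((T-s)^{-A}\Sigma y\big)=(T-s)^{-A_j}\pi_j(\Sigma y)=(T-s)^{-A_j}\Sigma_j y$ by the very definition of $\Sigma_j$, so that $\pi_j\big((T-s)^{-A}\Sigma\big)=(T-s)^{-A_j}\Sigma_j$ as $d_j\times m$ matrices. Substituting this back and recombining the two factors via $(T-t)^{A_j}(T-s)^{-A_j}=\big(\frac{T-t}{T-s}\big)^{A_j}$ yields precisely \eqref{help4}.

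The argument is essentially bookkeeping, so I do not anticipate a genuine obstacle; the single point meriting care is the interchange of the deterministic linear projection $\pi_j$ with the stochastic integral. This is the standard fact that a constant matrix factor may be taken inside or outside an It\^o integral, obtained by approximating the integrand by simple processes, and it is also the source of the \emph{almost surely} qualifier in the statement, as the resulting identity between stochastic integrals holds only up to a $P$-null set.
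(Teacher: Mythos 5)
Your proposal is correct and takes essentially the same approach as the paper: both rest on the intertwining relation $\pi_j(r^Ax)=r^{A_j}\pi_j(x)$ (equivalently \eqref{help6}) together with the interchange of the deterministic linear map $\pi_j$ with the It\^o integral, which the paper justifies by exactly the approximation you invoke, namely the $L^2$-limit of Riemann-type sums (Theorem 4.7.1 in Kuo \cite{Kuo}) and the almost-sure uniqueness of $L^2$-limits. Your preliminary factorization $X_t=(T-t)^AM_t$ is only a cosmetic rearrangement (it is \eqref{martdec} in the paper) and itself uses the same constant-matrix interchange fact, so it does not change the substance of the argument.
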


\begin{proof}
The mapping $\RR^m\ni y\mapsto \pi_j(\Sigma y)\in\RR^{d_j}$ is linear for every $j=1,\ldots,p$ and hence there exists a matrix
 $\Sigma_j\in\RR^{d_j\times m}$ such that $\pi_j(\Sigma y)=\Sigma_j y$ for $y\in\RR^m$.
Then
 \begin{align}\label{help6}
    t^A\Sigma y = (t^{A_1}\Sigma_1 y, \ldots, t^{A_p}\Sigma_p y) \qquad
       \text{for all $y\in\RR^m$ and $t>0$,}
 \end{align}
 and hence almost surely
 \begin{align*}
   X_t^{[j]} & = \pi_j(X_t) = \pi_j\left( \int_0^t\Big(\frac{T-t}{T-s}\Big)^A\Sigma\,\dd B_s \right)\\
             & = \pi_j \left( \int_0^t\Big(\frac{T-t}{T-s}\Big)^{A_1}\Sigma_1\,\dd B_s,\ldots,
                            \int_0^t\Big(\frac{T-t}{T-s}\Big)^{A_p}\Sigma_p\,\dd B_s \right)\\
             & = \int_0^t\Big(\frac{T-t}{T-s}\Big)^{A_j}\Sigma_j\,\dd B_s,
             \qquad t\in[0,T),
 \end{align*}
 which yields \eqref{help4}.
Note that the last but one equality follows by the construction of a multidimensional It\^{o} integral.
Namely, by a multidimensional version of Theorem 4.7.1 in Kuo \cite{Kuo}, we have
 \begin{align}\label{Kuo}
  \int_0^t\Big(\frac{T-t}{T-s}\Big)^A\Sigma\,\dd B_s
    = \lim_{n\to\infty}
       \sum_{k=1}^n \Big(\frac{T-t}{T-s_{k-1}}\Big)^A\Sigma (B_{s_k} - B_{s_{k-1}})
    \qquad \text{in \ $L^2$,}
 \end{align}
 where $\{0=s_0 <s_1<\cdots < s_n=t\}$ is a partition of $[0,t]$ with
  $\max_{1\leq k\leq n}(s_k-s_{k-1})\to0$ as $n\to\infty$.
Using \eqref{help6} we have
\begin{align*}
  \Big(\frac{T-t}{T-s_{k-1}}\Big)^A\Sigma (B_{s_k} - B_{s_{k-1}})
    =  \left( \Big(\frac{T-t}{T-s_{k-1}}\Big)^{A_j}\Sigma_j(B_{s_k} - B_{s_{k-1}})\right)_{j=1,\ldots,p}
 \end{align*}
 for $k=1,\ldots,n$, and hence, again by a multidimensional version of Theorem 4.7.1 in Kuo \cite{Kuo} we have,
 \begin{align*}
    &\lim_{n\to\infty}
       \sum_{k=1}^n \Big(\frac{T-t}{T-s_{k-1}}\Big)^A\Sigma (B_{s_k} - B_{s_{k-1}})\\
     & \quad = \lim_{n\to\infty} \sum_{k=1}^n
       \left( \Big(\frac{T-t}{T-s_{k-1}}\Big)^{A_j}\Sigma_j(B_{s_k} - B_{s_{k-1}})\right)_{j=1,\ldots,p}\\
    & \quad = \left( \int_0^t\Big(\frac{T-t}{T-s}\Big)^{A_1}\Sigma_1\,\dd B_s ,
             \ldots, \int_0^t\Big(\frac{T-t}{T-s}\Big)^{A_p}\Sigma_p\,\dd B_s \right)
                 \qquad \text{in \ $L^2$.}
 \end{align*}
Since the limit of a $L^2$-convergent sequence is almost surely well-defined, together with \eqref{Kuo} we get
 \[
    \int_0^t\Big(\frac{T-t}{T-s}\Big)^A\Sigma\,\dd B_s
      = \left( \int_0^t\Big(\frac{T-t}{T-s}\Big)^{A_1}\Sigma_1\,\dd B_s ,
             \ldots, \int_0^t\Big(\frac{T-t}{T-s}\Big)^{A_p}\Sigma_p\,\dd B_s \right)
              \qquad \text{a.s.,}
 \]
 implying the statement.
\end{proof}

Note that, by Lemma \ref{lemma_spectral1}, $(X_t^{[j]})_{t\in[0,T)}$ structurally has the same integral representation \eqref{mintrep}
 but with real spectrally simple exponent $A_j$ whose eigenvalues all have the same real part $a_j$.
Concluding, we only need to consider real spectrally simple exponents $A$  to decide whether $X_t\to0$ almost surely as $t\uparrow T$ or not.

We will need the following result on the growth behavior of the exponential operator $t^{A_j}$ near the origin $t=0$ for $j=1,\ldots,p$.
For a matrix $Q\in\RR^{d_j\times d_j}$, now we choose the associated matrix norm
 \[
   \Vert Q\Vert := \sup\big\{ \Vert Qy\Vert : \Vert y\Vert = 1, \, y\in\RR^{d_j} \big\}
 \]
 with respect to the standard Euclidean norm $\Vert y\Vert$ for $y\in\RR^{d_j}$.

\begin{lemma}\label{specbound}
For every $j=1,\ldots,p$ and every $\varepsilon>0$, there exists a constant $K\in(0,\infty)$ such that for all $0<t\leq T$ we have
 $$
 \|t^{A_{j}}\|\leq K\,t^{a_{j}-\varepsilon}\quad\text{and }\quad\|t^{-A_{j}}\|\leq K\,t^{-(a_{j}+\varepsilon)}.
 $$
\end{lemma}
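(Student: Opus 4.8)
The plan is to separate the common real part of the spectrum from the rest, and then to control the remaining, purely oscillatory/nilpotent growth. Write $A_j = a_j I_{d_j} + D_j$ with $D_j := A_j - a_j I_{d_j}$. Since $a_j I_{d_j}$ and $D_j$ commute, \eqref{help1} gives $t^{A_j} = t^{a_j I_{d_j}}\,t^{D_j} = t^{a_j}\,t^{D_j}$ and likewise $t^{-A_j} = t^{-a_j}\,t^{-D_j}$ for $t>0$. As $t^{a_j}>0$, the two asserted inequalities are equivalent to $\|t^{D_j}\|\le K\,t^{-\varepsilon}$ and $\|t^{-D_j}\|\le K\,t^{-\varepsilon}$ on $(0,T]$. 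By construction every eigenvalue of $A_j$ has real part $a_j$, so every eigenvalue of $D_j$, and hence of $-D_j$, has real part $0$. Thus it suffices to prove the following: for any real matrix $D$ all of whose eigenvalues have vanishing real part and any $\varepsilon>0$, there is a $K\in(0,\infty)$ with $\|t^D\|\le K\,t^{-\varepsilon}$ for all $t\in(0,T]$; applying this to $D=D_j$ and to $D=-D_j$ then finishes the proof.

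To bound $\|t^D\|$, I would pass to the complex Jordan form: choose an invertible $S\in\CC^{d_j\times d_j}$ with $J=S^{-1}DS$ block-diagonal, each block of the form $\ee\,i\beta\, I + N$ with $\beta\in\RR$ and $N$ nilpotent. Then $t^D = S\,t^J S^{-1}$, so by submultiplicativity $\|t^D\|\le \|S\|\,\|S^{-1}\|\,\|t^J\|$, the operator norm of the real matrix $t^D$ being at most the complex operator norm on the right. On a single block, \eqref{help1} yields $t^{\,i\beta I+N}=t^{\,i\beta}\sum_{k=0}^{\nu}\frac{(\log t)^k}{k!}\,N^k$, where $\nu$ is one less than the block size. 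Since $|t^{\,i\beta}|=1$ and $N$ is nilpotent, each block contributes a matrix whose entries are polynomials in $\log t$ of degree at most $\nu$. Taking $\nu$ to be the maximal such degree over all blocks, we obtain a constant $C$ with $\|t^J\|\le C\,(1+|\log t|^{\nu})$, and hence $\|t^D\|\le C'\,(1+|\log t|^{\nu})$ for all $t\in(0,T]$.

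It remains to dominate this polynomial-in-$\log t$ growth by $t^{-\varepsilon}$. For fixed $\varepsilon>0$ one has $|\log t|^{\nu}\,t^{\varepsilon}\to 0$ as $t\downarrow 0$, because $t^{-\varepsilon}=\ee^{\varepsilon|\log t|}$ grows exponentially in $|\log t|$ while $|\log t|^{\nu}$ grows only polynomially; hence $C'(1+|\log t|^{\nu})\le t^{-\varepsilon}$ for $t$ in some interval $(0,\delta]$. On the remaining interval $[\delta,T]$ the map $t\mapsto\|t^D\|$ is continuous, hence bounded by some $M\in(0,\infty)$, while $t^{-\varepsilon}\ge T^{-\varepsilon}>0$, so $\|t^D\|\le M\,T^{\varepsilon}\,t^{-\varepsilon}$ there. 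Choosing $K$ as the larger of the two resulting constants gives the bound on all of $(0,T]$. The only genuinely delicate point is the second paragraph: one must verify that the oscillatory diagonal factors $t^{\,i\beta}$ contribute nothing to the growth, having modulus one, and that the nilpotent parts produce only powers of $\log t$; once this polynomial bound is in hand, its domination by an arbitrarily small negative power of $t$ is elementary.
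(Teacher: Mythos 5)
Your proof is correct, but it takes a genuinely different route from the paper. The paper does not touch Jordan forms at all: it invokes Corollary 2.2.5 of Meerschaert and Scheffler \cite{MS}, which gives the uniform growth behavior $t^{-\beta}\Vert t^{A_j}x\Vert\to\infty$ (for $\beta<a_j$) resp.\ $\to 0$ (for $\beta>a_j$) as $t\to\infty$ uniformly on compact sets, specializes to the unit sphere to get $t^{-(a_j-\varepsilon)}\Vert t^{A_j}\Vert\to 0$, and then extracts the bound on $(0,T]$ from the boundedness of a convergent function; the same argument with $\beta>a_j$ handles $\Vert t^{-A_j}\Vert$. You instead split off the common real part, $A_j=a_jI_{d_j}+D_j$ with $\RSpec(D_j)=\{0\}$, pass to the complex Jordan form of $D_j$, and bound $\Vert t^{\pm D_j}\Vert\leq C'\bigl(1+|\log t|^{\nu}\bigr)$ explicitly, which any power $t^{-\varepsilon}$ dominates near the origin. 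Each step of yours checks out: $a_jI_{d_j}$ and $D_j$ commute so \eqref{help1} factors as claimed; the diagonal factors $t^{i\beta}$ have modulus one; the nilpotent part truncates the series at the block size; and for a real matrix the spectral norm over $\RR^{d_j}$ and over $\CC^{d_j}$ coincide, so your "at most" is harmless (the displayed "$\ee\,i\beta\,I+N$" is evidently a typo for $i\beta I+N$, and $K$ may be maximized over the finitely many $j$ just as in the paper). What each approach buys: the paper's proof is two lines given the machinery of \cite{MS}, on which the rest of the paper leans anyway (Theorem 2.2.4 reappears in Lemma \ref{lemma_quadratic_variation} and Section \ref{Section_asymptotic}); your proof is elementary and self-contained, and in fact yields the sharper log-polynomial estimate $\Vert t^{A_j}\Vert\leq K\,t^{a_j}\bigl(1+|\log t|^{\nu}\bigr)$ on $(0,T]$, of which the stated $t^{a_j-\varepsilon}$ bound is a weakening.
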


\begin{proof}
By Corollary 2.2.5 in Meerschaert and Scheffler \cite{MS},
 if $\beta<a_j$, then $t^{-\beta}\Vert t^{A_j}x\Vert\to \infty$ as $t\to\infty$ uniformly in $x\in\RR^{d_j}$
 from compact subsets of $\pi_j(\RR^d)\setminus \{0\}$.
Hence for all $\varepsilon>0$,
 \[
   t^{-(a_j-\varepsilon)}\Vert t^{A_j}x\Vert\to 0 \qquad \text{as \ $t\to\infty$}
 \]
 uniformly in $x\in\RR^{d_j}$ from compact subsets of $\pi_j(\RR^d)\setminus\{0\}$, i.e.,
 \[
   \lim_{t\to\infty}\sup_{x\in R_j} t^{-(a_j-\varepsilon)} \Vert t^{A_j}x\Vert=0
 \]
 for all compact subsets $R_j$ of $\pi_j(\RR^d)\setminus\{0\}$.
Then, by choosing $R_j:= \{ x\in \pi_j(\RR^d) : \Vert x\Vert =1\}$, and using the definition of the norm,
 we have
 \[
   \lim_{t\to\infty} \sup_{x\in R_j} t^{-(a_j-\varepsilon)} \Vert t^{A_j}x\Vert
      =\lim_{t\to\infty} t^{-(a_j-\varepsilon)} \Vert t^{A_j}\Vert
      = 0.
 \]
Since a convergent sequence is bounded, we have
 \[
   \sup_{t\in(0,T]} t^{-(a_j-\varepsilon)} \Vert t^{A_j}\Vert =:K_j' <\infty,
   \quad j=1,\ldots,p.
 \]
Hence with $K':=\max\{K_1',\ldots,K_p'\}$,  we have $\Vert t^{A_j}\Vert \leq K' t^{a_j-\varepsilon}$ for all $t\in(0,T]$ and $j=1,\ldots,p$.

Similarly, by Corollary 2.2.5 in Meerschaert and Scheffler \cite{MS},
 if $\beta>a_j$, then $t^{-\beta}\Vert t^{A_j}x\Vert\to 0$ as $t\to\infty$ uniformly in $x\in\RR^{d_j}$ from compact subsets
 of $\pi_j(\RR^d)$.
Hence, with the same arguments as above, there exists $K''>0$ such that $\Vert t^{-A_j}\Vert \leq K'' t^{-(a_j+\varepsilon)}$ for all $t\in(0,T]$
 and $j=1,\ldots,p$.

Finally, one can choose $K:=\max\{K',K''\}$.
\end{proof}

We note that in Lemma \ref{specbound} one can use any matrix norm on $\RR^{d_j\times d_j}$ (since any two matrix norms on $\RR^{d_j\times d_j}$
 are equivalent).

\subsection{SLLN and LIL for martingales on $[0,T)$.}

Recall the integral representation \eqref{mintrep} of the solution $(X_t)_{t\in[0,T)}$ of \eqref{msde} with $X_0=0$. We may write
\begin{equation}\label{martdec}
X_t=(T-t)^AM_t\quad\text{with}\quad M_t:=\int_0^t(T-s)^{-A}\Sigma\,\dd B_s,\quad t\in[0,T).
\end{equation}
Here $(M_t)_{t\in[0,T)}$ is a continuous square-integrable martingale whose $i$-th coordinate $(M_t^{(i)})_{t\in[0,T)}$ has quadratic variation process given by
 \begin{equation}\label{qvar}
 \langle M^{(i)}\rangle_t=\int_0^t\big\| e_i^\top(T-s)^{-A}\Sigma \big\|^2\,\dd s,\quad t\in[0,T),
 \end{equation}
 for every $i=1,\ldots,d$, where $\{e_1,\ldots,e_d\}$ denotes the canonical basis of $\RR^d$.
Note that $(\langle M^{(i)}\rangle_t)_{t\in[0,T)}$ is a continuous deterministic function.

We call the attention that from now on the superscripts in curved brackets denote coordinates rather than spectral components denoted by superscripts with squared brackets as in Section \ref{sebsection_spectral}.

Usually, the strong law of large numbers for martingales is formulated as a limit theorem as $t\to\infty$.
In our case we need to consider the limiting behavior as $t\uparrow T$.
Due to the strictly increasing and continuous time change $t(s)=(2T/\pi)\arctan s$, $s\geq 0$ (which is a bijection between
 $[0,\infty)$ and $[0,T)$),
 \ we get that $(\widetilde M_s:=M_{t(s)})_{s\geq0}$ is a continuous square-integrable martingale with respect to the filtration
  $(\widetilde{\mathcal F}_s:=\mathcal F_{t(s)})_{s\geq0}$ and we can easily adopt the following well-known versions of the strong law of large numbers for continuous square-integrable martingales.
\begin{lemma}\label{lln1}
 If $\lim_{t\uparrow T}\langle M^{(i)}\rangle_t<\infty$ for every $i=1,\ldots,d$, then
 \[
   P\left(\lim_{t\uparrow T}M_t\;\text{exists} \right)=1.
 \]
\end{lemma}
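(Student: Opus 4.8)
The plan is to transfer the problem to the half-line $[0,\infty)$ via the deterministic time change $t(s)=(2T/\pi)\arctan s$ already introduced above, and then to invoke the classical $L^2$-convergence theorem for martingales applied to the time-changed process $(\widetilde M_s)_{s\geq 0}$. The reduction is natural because $s\mapsto t(s)$ is an increasing bijection of $[0,\infty)$ onto $[0,T)$, so that convergence of $M_t$ as $t\uparrow T$ is exactly convergence of $\widetilde M_s$ as $s\to\infty$.

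First I would record how the quadratic variation behaves under this deterministic time change: for each coordinate $i=1,\ldots,d$ one has $\langle \widetilde M^{(i)}\rangle_s=\langle M^{(i)}\rangle_{t(s)}$ for all $s\geq 0$. Since $\langle M^{(i)}\rangle_\cdot$ is a continuous deterministic nondecreasing function (as noted after \eqref{qvar}) and $t(s)\uparrow T$ as $s\to\infty$, the hypothesis yields
\[
 \langle \widetilde M^{(i)}\rangle_\infty:=\lim_{s\to\infty}\langle \widetilde M^{(i)}\rangle_s=\lim_{t\uparrow T}\langle M^{(i)}\rangle_t<\infty,\qquad i=1,\ldots,d.
\]
Because the quadratic variation is deterministic here, this finiteness holds surely rather than only on some event of positive probability, which removes the usual localization subtleties attached to the general statement on $\{\langle M\rangle_\infty<\infty\}$.

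Next I would use this to bound each coordinate in $L^2$. Since $(\widetilde M^{(i)}_s)^2-\langle \widetilde M^{(i)}\rangle_s$ is a martingale and $\widetilde M^{(i)}_0=0$, we have
\[
 \EE\big[(\widetilde M^{(i)}_s)^2\big]=\EE\big[\langle \widetilde M^{(i)}\rangle_s\big]=\langle \widetilde M^{(i)}\rangle_s\leq \langle \widetilde M^{(i)}\rangle_\infty<\infty
\]
for all $s\geq 0$, so $\sup_{s\geq 0}\EE[(\widetilde M^{(i)}_s)^2]<\infty$. By Doob's martingale convergence theorem the $L^2$-bounded martingale $(\widetilde M^{(i)}_s)_{s\geq 0}$ converges almost surely (and in $L^2$) to a finite limit as $s\to\infty$, for each $i=1,\ldots,d$; intersecting these finitely many probability-one events shows that $\lim_{s\to\infty}\widetilde M_s$ exists almost surely in $\RR^d$.

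Finally, I would translate back through the bijection $s\mapsto t(s)$: as $s\to\infty$ we have $t(s)\uparrow T$, and since the sample paths of $(M_t)_{t\in[0,T)}$ are continuous, $\lim_{t\uparrow T}M_t=\lim_{s\to\infty}\widetilde M_s$ exists almost surely, which is the assertion. The only genuinely technical point is the transformation rule for the quadratic variation under the time change, together with the observation that its deterministic nature makes the hypothesis hold everywhere; once this is in place the conclusion is an immediate application of the standard $L^2$-martingale convergence theorem, and no sharper tool (such as a Dambis–Dubins–Schwarz representation) is needed.
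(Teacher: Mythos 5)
Your argument is correct. The time change $t(s)=(2T/\pi)\arctan s$ is handled properly (for a deterministic, continuous, increasing bijection one indeed has $\langle \widetilde M^{(i)}\rangle_s=\langle M^{(i)}\rangle_{t(s)}$, and existence of $\lim_{t\uparrow T}M_t$ is equivalent to existence of $\lim_{s\to\infty}\widetilde M_s$ pathwise), the identity $\EE[(\widetilde M^{(i)}_s)^2]=\langle \widetilde M^{(i)}\rangle_s$ is legitimate because $(M_t)$ is a square-integrable martingale started at $0$ with \emph{deterministic} bracket, and Doob's convergence theorem for $L^2$-bounded continuous-path martingales then gives almost sure coordinatewise convergence. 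However, your route differs from the paper's. The paper does not argue via $L^2$-boundedness at all: after the same time change it simply invokes the general result that a continuous local martingale converges almost surely on the event $\{\langle M\rangle_\infty<\infty\}$ (Proposition 4.1.26 in Revuz and Yor, combined with Proposition 5.1.8 there to identify the bracket of the stochastic integral). That citation-based argument is stronger in scope, since it needs no integrability of $M$ and works when the bracket is random, with finiteness holding only on an event of positive probability. Your proof trades this generality for elementariness: by exploiting the fact that $\langle M^{(i)}\rangle$ is deterministic here --- a point you correctly flag as what removes the localization issues --- you reduce everything to the textbook $L^2$-martingale convergence theorem, avoiding the machinery behind the Revuz--Yor proposition entirely. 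In this specific Gaussian setting the two arguments are equally rigorous; yours is self-contained and arguably more transparent, while the paper's is a one-line reduction to a standard reference that would survive verbatim if the integrand in \eqref{martdec} were random. One could even note that the time change is dispensable in your version, since Doob's upcrossing argument applies directly to martingales indexed by $[0,T)$ along sequences $t_n\uparrow T$; you keep it only to match the standard formulation on $[0,\infty)$, which is harmless.
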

 For the proof we refer to Proposition 4.1.26 together with Proposition 5.1.8 in \cite{RevYor}.
 \begin{lemma}\label{lln2}
 Let $f:[x_0,\infty)\to(0,\infty)$ be an increasing function, where $x_0>0$ such that $\int_{x_0}^\infty f(x)^{-2}\,\dd x<\infty$.
 If $\lim_{t\uparrow T}\langle M^{(i)}\rangle_t=\infty$ for some $i\in\{1,\ldots,d\}$, then
 \[
  P\left(\lim_{t\uparrow T}\frac{M^{(i)}_t}{f(\langle M^{(i)}\rangle_t)}=0\right)=1.
 \]
 \end{lemma}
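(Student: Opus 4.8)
The plan is to transfer the statement, via the deterministic time change $t(s)=(2T/\pi)\arctan s$ already introduced, to the classical strong law of large numbers for continuous square-integrable martingales on $[0,\infty)$, and then to establish that classical statement. Writing $\widetilde M_s=M_{t(s)}$ as before, the map $s\mapsto t(s)$ is a deterministic continuous increasing bijection from $[0,\infty)$ onto $[0,T)$, so the bracket transforms simply as $\langle\widetilde M^{(i)}\rangle_s=\langle M^{(i)}\rangle_{t(s)}$ for every $i$. Since $t(s)\uparrow T$ as $s\to\infty$, the hypothesis $\lim_{t\uparrow T}\langle M^{(i)}\rangle_t=\infty$ becomes $\langle\widetilde M^{(i)}\rangle_s\to\infty$ as $s\to\infty$, and the conclusion to be proved becomes
\[
 P\Big(\lim_{s\to\infty}\frac{\widetilde M^{(i)}_s}{f(\langle\widetilde M^{(i)}\rangle_s)}=0\Big)=1,
\]
because $t(s)$ being a bijection onto $[0,T)$ identifies the limits $t\uparrow T$ and $s\to\infty$.

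It then remains to prove the following classical fact: if $(N_s)_{s\ge0}$ is a continuous square-integrable martingale with $N_0=0$ and $\langle N\rangle_s\to\infty$, and $f$ is increasing and positive on $[x_0,\infty)$ with $\int_{x_0}^\infty f(x)^{-2}\,\dd x<\infty$, then $N_s/f(\langle N\rangle_s)\to0$ almost surely. Here the situation is especially favourable because $\langle M^{(i)}\rangle$, and hence $\langle\widetilde M^{(i)}\rangle$, is a deterministic continuous function tending to $\infty$; in particular $\langle N\rangle_\infty=\infty$ holds surely, so no restriction to a random event is needed. By the Dambis--Dubins--Schwarz theorem one may write $N_s=\beta_{\langle N\rangle_s}$ for a standard one-dimensional Brownian motion $(\beta_u)_{u\ge0}$ (the deterministic, unbounded bracket guarantees that $\beta$ is a genuine Brownian motion on all of $[0,\infty)$). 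Substituting $u=\langle N\rangle_s$ and using monotonicity of $f$, the claim reduces to the single statement
\[
 \lim_{u\to\infty}\frac{\beta_u}{f(u)}=0\qquad\text{almost surely.}
\]

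To prove this last statement I would use a Borel--Cantelli argument over geometric blocks. Fix $\varepsilon>0$ and pick $n_0$ with $a_{n_0}:=2^{n_0}\ge x_0$, and set $a_n:=2^n$ for $n\ge n_0$. Doob's maximal inequality gives
\[
 P\Big(\max_{0\le u\le a_{n+1}}|\beta_u|\ge\varepsilon f(a_n)\Big)\le\frac{a_{n+1}}{\varepsilon^2 f(a_n)^2}=\frac{2}{\varepsilon^2}\,\frac{a_n}{f(a_n)^2},
\]
and monotonicity of $f$ yields $a_n f(a_n)^{-2}\le 2\int_{a_{n-1}}^{a_n}f(x)^{-2}\,\dd x$, so these probabilities are summable thanks to $\int_{x_0}^\infty f(x)^{-2}\,\dd x<\infty$. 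By Borel--Cantelli, almost surely $\max_{u\le a_{n+1}}|\beta_u|<\varepsilon f(a_n)$ for all large $n$; since $f$ is increasing, for $u\in[a_n,a_{n+1}]$ this forces $|\beta_u|/f(u)\le|\beta_u|/f(a_n)<\varepsilon$, whence $\limsup_{u\to\infty}|\beta_u|/f(u)\le\varepsilon$ a.s. Letting $\varepsilon\downarrow0$ along a sequence gives the claim.

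The routine parts are the time-change bookkeeping and the maximal-inequality estimate. The step that needs the most care---and which is really the crux---is the block comparison $\sum_n a_n f(a_n)^{-2}<\infty$: it is precisely here that the integrability hypothesis together with the monotonicity of $f$ is used, and choosing the geometric grid $a_n=2^n$ (rather than an arithmetic one) is what makes the comparison with the integral work while keeping $a_{n+1}/a_n$ bounded. Alternatively, one could bypass the explicit Brownian reduction and simply invoke a ready-made strong law of large numbers for continuous local martingales applied to $\widetilde M^{(i)}$; the deterministic time change then delivers the stated $t\uparrow T$ version directly.
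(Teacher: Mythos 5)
Your proof is correct, and it is worth noting that it supplies more than the paper does: the paper's entire ``proof'' of Lemma \ref{lln2} consists of the time change $t(s)=(2T/\pi)\arctan s$ introduced just before the lemma together with a bare citation of Exercise 5.1.16 in Chapter V of Revuz and Yor \cite{RevYor} (alternatively Theorem 2.3 in \cite{BarPap}). Your first step --- transporting the bracket via $\langle\widetilde M^{(i)}\rangle_s=\langle M^{(i)}\rangle_{t(s)}$ and identifying $t\uparrow T$ with $s\to\infty$ --- is exactly the paper's reduction, and the remainder of your argument is a correct, self-contained proof of the cited classical strong law (indeed it is essentially the intended solution of that exercise, whose hint is precisely the Dambis--Dubins--Schwarz reduction to Brownian motion). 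All the delicate points check out: by \eqref{qvar} the bracket is deterministic, continuous and tends to infinity, so the DDS Brownian motion is a genuine Brownian motion on all of $[0,\infty)$ with no enlargement of the probability space needed; the substitution $u=\langle N\rangle_s$ requires only that the bracket tend to infinity, not that it be strictly increasing; the quantity $M^{(i)}_t/f(\langle M^{(i)}\rangle_t)$ is well defined for $t$ close enough to $T$ since $\langle M^{(i)}\rangle_t\geq x_0$ eventually; Doob's maximal inequality applied to the submartingale $\beta_u^2$ gives exactly your bound $a_{n+1}/(\varepsilon^2 f(a_n)^2)$; and the finitely many initial blocks with $a_{n-1}<x_0$ do not affect summability. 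What your explicit route buys over the paper's citation is transparency about where the hypotheses enter: monotonicity and integrability of $f$ are used only in the comparison $a_n f(a_n)^{-2}\leq 2\int_{a_{n-1}}^{a_n}f(x)^{-2}\,\dd x$, and the geometric grid $a_n=2^n$ keeps $a_{n+1}/a_n$ bounded so that the maximal-inequality bound is dominated by that integral, after which Borel--Cantelli and $\varepsilon\downarrow0$ along a sequence finish the proof.
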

For the proof we refer to Exercise 5.1.16 in \cite{RevYor} or to Theorem 2.3 in \cite{BarPap}.

Next we present a law of the iterated logarithm for $(M_t)_{t\in[0,T)}$.

\begin{lemma}\label{MLIL}
If $P(\lim_{t \uparrow T} \langle M^{(i)} \rangle_t = \infty)=1$ for some $i\in\{1,\ldots,d\}$, then
 \begin{align*}
  & P\left( \limsup_{t \uparrow T}
              \frac{M^{(i)}_t}{\sqrt{2 \langle M^{(i)} \rangle_t \ln(\ln \langle M^{(i)} \rangle_t)}}
        = 1 \right)\\
  &\quad =
    P\left( \liminf_{t \uparrow T}
               \frac{M^{(i)}_t}{\sqrt{2 \langle M^{(i)} \rangle_t \ln(\ln \langle M^{(i)} \rangle_t)}}
        = - 1 \right)
   = 1 .
 \end{align*}
\end{lemma}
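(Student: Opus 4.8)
The plan is to reduce the statement to the classical law of the iterated logarithm for Brownian motion by the same device already used for the strong law of large numbers, namely the time change $t(s)=(2T/\pi)\arctan s$ that maps $[0,\infty)$ bijectively onto $[0,T)$. Since the quadratic variation $\langle M^{(i)}\rangle_t$ is a deterministic continuous function, the hypothesis $P(\lim_{t\uparrow T}\langle M^{(i)}\rangle_t=\infty)=1$ simply means $\lim_{t\uparrow T}\langle M^{(i)}\rangle_t=\infty$, and the whole argument can be carried out pathwise in the Brownian representation of the time-changed martingale.

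First I would set $\widetilde M^{(i)}_s:=M^{(i)}_{t(s)}$ for $s\geq0$. As already noted before Lemma \ref{lln1}, $(\widetilde M_s)_{s\geq0}$ is a continuous square-integrable martingale with respect to $(\widetilde{\mathcal F}_s=\mathcal F_{t(s)})_{s\geq0}$, and hence so is its $i$-th coordinate, with quadratic variation
\[
  \langle\widetilde M^{(i)}\rangle_s=\langle M^{(i)}\rangle_{t(s)},\qquad s\geq0.
\]
This is a continuous, non-decreasing, deterministic function with $\langle\widetilde M^{(i)}\rangle_0=0$ and, by hypothesis, $\langle\widetilde M^{(i)}\rangle_s\to\infty$ as $s\to\infty$.

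Next I would invoke the Dambis--Dubins--Schwarz theorem (see \cite{RevYor}). Because $\langle\widetilde M^{(i)}\rangle_\infty=\infty$, there is a standard one-dimensional Brownian motion $(\beta_u)_{u\geq0}$ on the original probability space (no enlargement being needed in this case) such that
\[
  \widetilde M^{(i)}_s=\beta_{\langle\widetilde M^{(i)}\rangle_s}\qquad\text{for all }s\geq0.
\]
Applying the classical law of the iterated logarithm for Brownian motion,
\[
  \limsup_{u\to\infty}\frac{\beta_u}{\sqrt{2u\ln(\ln u)}}=1
  \quad\text{and}\quad
  \liminf_{u\to\infty}\frac{\beta_u}{\sqrt{2u\ln(\ln u)}}=-1
  \qquad\text{a.s.,}
\]
and substituting $u=\langle\widetilde M^{(i)}\rangle_s$, I would obtain the corresponding $\limsup$ and $\liminf$ as $s\to\infty$ for the ratio $\widetilde M^{(i)}_s/\sqrt{2\langle\widetilde M^{(i)}\rangle_s\ln(\ln\langle\widetilde M^{(i)}\rangle_s)}$. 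Finally, since $t(\cdot)$ is a continuous increasing bijection of $[0,\infty)$ onto $[0,T)$ with $t(s)\uparrow T$ as $s\to\infty$, undoing the time change via $t=t(s)$ converts these into the claimed limits as $t\uparrow T$, the $\liminf$ statement following either by symmetry ($-\beta$ is again a Brownian motion) or directly from the $\liminf$ above.

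The hard part is not analytical but bookkeeping: I must justify that the substitution $u=\langle\widetilde M^{(i)}\rangle_s$ transfers the limit superior (and inferior) as $u\to\infty$ into the one as $s\to\infty$ without loss. This is where I would use that $s\mapsto\langle\widetilde M^{(i)}\rangle_s$ is continuous, non-decreasing and tends to $\infty$, so that its range is cofinal in $[0,\infty)$: every sequence $u_n\to\infty$ is attained as $u_n=\langle\widetilde M^{(i)}\rangle_{s_n}$ for some $s_n\to\infty$ and conversely, which forces the two $\limsup$'s (respectively $\liminf$'s) to coincide. The only other point to verify is that $\langle\widetilde M^{(i)}\rangle_\infty=\infty$ genuinely holds so that the Dambis--Dubins--Schwarz representation applies on the original space, and this is immediate from the hypothesis. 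No new estimate is required, as the probabilistic content is entirely contained in the Brownian law of the iterated logarithm.
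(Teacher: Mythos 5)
Your proof is correct and is essentially the paper's own argument: the paper simply cites Exercise 1.15 in Chapter V of Revuz and Yor \cite{RevYor}, whose standard solution is exactly your route --- the $\arctan$ time change already set up before Lemma \ref{lln1}, the Dambis--Dubins--Schwarz representation $\widetilde M^{(i)}_s=\beta_{\langle\widetilde M^{(i)}\rangle_s}$ (valid on the original space since $\langle\widetilde M^{(i)}\rangle_\infty=\infty$), and the classical Brownian law of the iterated logarithm, with the substitution justified by continuity and divergence of the deterministic quadratic variation. Nothing is missing; you have merely written out the proof that the paper delegates to the cited exercise.
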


Lemma \ref{MLIL} follows by Exercise 1.15 in Chapter V of Revuz and Yor \cite{RevYor}.

\section{Bridge property}\label{Section_bridge_property}

Let $\RSpec(A):=\{\Re\lambda:\,\lambda\in\Spec(A)\}$ be the collection of distinct real parts of the eigenvalues of the matrix $A$,
 where $\Spec(A)$ denotes the set of eigenvalues of $A$.
If there exists $\lambda\in\Spec(A)$ with $\Re\lambda\leq0$, then the process $(X_t)_{t\in[0,T)}$ defined by \eqref{mintrep}
 with initial condition \ $X_0=0\in\RR^d$ does not fulfill that $X_t$ converges to some deterministic $d$-dimensional vector
 almost surely as $t\uparrow T$ in general.
This fact is known for the one-dimensional situation $d=1$ from Remark 3.5 in \cite{BarPap}.
To give an explicit multidimensional example, we consider a $d\times d$ matrix $A$ having only purely imaginary eigenvalues.
\begin{example}\label{Example_no_bridge}
Let $\Sigma=I_d$ be the $d\times d$ identity matrix and let $A\in\RR^{d\times d}$ be a skew symmetric matrix, i.e.\ $A^\top = -A$. Then all the non-zero eigenvalues of $A$ are purely imaginary and $r^A$ is an orthogonal matrix for every $r>0$.
Due to the invariance of the incremental distributions of a standard Wiener process with respect to orthogonal transformations, using \eqref{Kuo} one can easily derive that the distributions of $X_t$ and $B_t$ coincide for every $t\in[0,T)$.
Hence $X_t$ converges in distribution to $B_T$ as $t\uparrow T$, which shows that it cannot hold that $X_t$ converges almost surely to some deterministic $d$-dimensional vector as $t\uparrow T$.
\end{example}

Our next result is about the limit behavior of the quadratic variation processes $\langle M^{(i)}\rangle_t$ as $t\uparrow T$
 for $i=1,\ldots,d$.

\begin{lemma}\label{lemma_quadratic_variation}
If $\RSpec(A)\subseteq(0,1/2)$, then for all $i=1,\ldots,d$, the quadratic variation process
 $(\langle M^{(i)}\rangle_t)_{t\in[0,T)}$ is bounded.
If $\RSpec(A)\subseteq(1/2,\infty)$ and $\Sigma$ has full rank $d$ (and consequently $m\geq d$),
 then $\lim_{t\uparrow T}\langle M^{(i)}\rangle_t = \infty$ for all $i=1,\ldots,d$.
\end{lemma}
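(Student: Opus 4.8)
The plan is to analyze the integrand $\|e_i^\top(T-s)^{-A}\Sigma\|^2$ in \eqref{qvar} through the substitution $u=T-s$, so that $u$ decreases to $0$ as $s\uparrow T$, and to control the growth of the exponential operator $u^{-A}$ as $u\downarrow 0$. Writing $a_1<\cdots<a_p$ for the distinct real parts of the eigenvalues of $A$, the dichotomy in the statement reflects whether the resulting power of $u$ is integrable at $0$, i.e.\ whether the relevant exponent lies below or above the critical value $1/2$. Both claims reduce to sharp estimates of $\|u^{-A}\|$ (for the upper bound) and of $\|u^{-A^\top}e_i\|$ (for the lower bound) near $u=0$.

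For the first claim, I would bound the integrand from above by the operator norm, $\|e_i^\top u^{-A}\Sigma\|^2\le\|u^{-A}\|^2\,\|\Sigma\|^2$. Applying Lemma \ref{specbound} to each spectral block $A_j$ and using the block-diagonal structure $u^{-A}=u^{-A_1}\oplus\cdots\oplus u^{-A_p}$, one obtains, for any $\varepsilon>0$, a constant $K$ with $\|u^{-A}\|\le K\,u^{-(a_p+\varepsilon)}$ for all $u\in(0,T]$. Since $a_p<1/2$, I may fix $\varepsilon>0$ so small that $a_p+\varepsilon<1/2$, whence
\[
 \langle M^{(i)}\rangle_t\le K^2\|\Sigma\|^2\int_0^t(T-s)^{-2(a_p+\varepsilon)}\,\dd s
   =K^2\|\Sigma\|^2\int_{T-t}^T u^{-2(a_p+\varepsilon)}\,\dd u .
\]
Because $2(a_p+\varepsilon)<1$, the integral $\int_0^T u^{-2(a_p+\varepsilon)}\,\dd u$ is finite, giving a bound uniform in $t\in[0,T)$.

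For the second claim, the full-rank hypothesis on $\Sigma$ enters decisively: the matrix $\Sigma\Sigma^\top$ is then positive definite, so with $c>0$ its smallest eigenvalue we obtain
\[
 \|e_i^\top u^{-A}\Sigma\|^2
   =(u^{-A^\top}e_i)^\top\Sigma\Sigma^\top(u^{-A^\top}e_i)
   \ge c\,\|u^{-A^\top}e_i\|^2 .
\]
It remains to bound $\|u^{-A^\top}e_i\|$ from below. Since $A^\top$ has the same eigenvalue real parts as $A$, its smallest is $a_1>1/2$; setting $v=1/u$ and invoking Corollary 2.2.5 in Meerschaert and Scheffler \cite{MS} for $A^\top$, for every $\beta<a_1$ one has $v^{-\beta}\|v^{A^\top}e_i\|\to\infty$ as $v\to\infty$, hence $\|u^{-A^\top}e_i\|\ge u^{-(a_1-\varepsilon)}$ for $u$ small. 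Choosing $\varepsilon>0$ with $a_1-\varepsilon>1/2$ yields $\|e_i^\top u^{-A}\Sigma\|^2\ge c\,u^{-2(a_1-\varepsilon)}$ near $u=0$, and since $2(a_1-\varepsilon)>1$ the integral $\int_{T-t}^T u^{-2(a_1-\varepsilon)}\,\dd u$ diverges as $t\uparrow T$, proving $\langle M^{(i)}\rangle_t\to\infty$.

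I expect the lower bound to be the main obstacle. The cited growth result is cleanest for a real spectrally simple exponent, whereas here $A^\top$ may have several distinct real parts and $e_i$ need not lie in a single spectral subspace; the honest argument decomposes $e_i$ along the $A^\top$-invariant subspaces and applies a spectral projection (which commutes with $A^\top$) to isolate a nonzero component, on which the spectrally simple estimate with real part $\ge a_1$ applies. The two further points to keep honest are that the threshold $1/2$ is exactly what makes $u^{-1}$ the borderline integrand, and that the full-rank hypothesis on $\Sigma$ cannot be dropped, since otherwise $\|e_i^\top u^{-A}\Sigma\|$ could vanish in directions annihilated by $\Sigma$.
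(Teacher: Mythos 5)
Your proposal is correct and takes essentially the same route as the paper: the upper bound via $\Vert (T-s)^{-A}\Vert \leq K (T-s)^{-(a+\varepsilon)}$ from Lemma \ref{specbound} with $2(a+\varepsilon)<1$, and the lower bound via the full rank of $\Sigma$ (the paper writes $\Vert (T-s)^{-A^\top}e_i\Vert \leq \Vert(\Sigma\Sigma^\top)^{-1}\Sigma\Vert\,\Vert\Sigma^\top(T-s)^{-A^\top}e_i\Vert$, which is equivalent in substance to your smallest-eigenvalue bound for $\Sigma\Sigma^\top$) combined with the Meerschaert--Scheffler growth estimate, yielding divergence of $\int_{t_0}^t (T-s)^{-2(a-\varepsilon)}\,\dd s$ since $a-\varepsilon>1/2$. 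The only difference is organizational: the paper reduces at the outset to real spectrally simple $A$ via the decomposition of Section \ref{sebsection_spectral} and then cites Theorem 2.2.4 of \cite{MS} directly, whereas you keep general $A$ (using $a_p$ for the upper and $a_1$ for the lower bound) and correctly identify, and repair by projecting $e_i$ onto a nonzero spectral component of $A^\top$, the resulting multi-block subtlety.
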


\begin{proof}
As explained in Section 2.1, we only need to consider real spectrally simple matrices $A$ with $\RSpec(A)=\{a\}$ for some $a>0$.
Note that if $\Sigma$ has full rank $d$, then $\Sigma_j$ has full rank $d_j$ for all $j=1,\ldots,p$
 (due to $\pi_j(\Sigma y) = \Sigma_j y$, $y\in\RR^m$).
We distinguish between the following two cases.

{\it Case 1:} $a\in(0,\frac12)$. Let $\beta\in(a,1/2)$.
Then according to \eqref{qvar} and Lemma \ref{specbound}
 with $a_j:=a$ and $\varepsilon:=\beta - a_j>0$, there exists a constant $K>0$ such that
\begin{align}\label{help5}
  \begin{split}
 \langle M^{(i)}\rangle_t & =\int_0^t\big\| e_i^\top(T-s)^{-A}\Sigma \big\|^2\,\dd s\leq\int_0^t\|e_i\|^2\|(T-s)^{-A}\|^2\|\Sigma \|^2\,\dd s\\
  & \leq K^2\Vert\Sigma\Vert^2 \int_0^t(T-s)^{-2\beta}\,\dd s
    = K^2 \Vert\Sigma\Vert^2 \,\frac{T^{1-2\beta}-(T-t)^{1-2\beta}}{1-2\beta}
 \end{split}
 \end{align}
for $t\in[0,T)$ and $i=1,\ldots,d$, which shows that the quadratic variation process $(\langle M^{(i)}\rangle_t)_{t\in[0,T)}$ is bounded due to $\beta<1/2$.

{\it Case 2:} $a>1/2$ and $\Sigma$ has full rank $d$.
By \eqref{qvar}, we get
 \begin{align*}
  \langle M^{(i)}\rangle_t
      = \int_0^t \Vert e_i^\top(T-s)^{-A}\Sigma\Vert^2\,\dd s
     = \int_0^t \Vert \Sigma^\top (T-s)^{-A^\top}e_i\Vert^2\,\dd s
 \end{align*}
for $i=1,\ldots,d$ and $t\in[0,T)$.
Since $\Sigma$ has full rank $d$, we have $\Sigma\Sigma^\top$ is invertible
 and
 \begin{align*}
  \Vert(T-s)^{-A^\top}e_i\Vert^2
       \leq \Vert (\Sigma\Sigma^\top)^{-1} \Sigma\Vert^2
            \Vert \Sigma^\top (T-s)^{-A^\top}e_i\Vert^2
            =: C \Vert \Sigma^\top (T-s)^{-A^\top}e_i\Vert^2
 \end{align*}
 for all $s\in[0,T)$ and $i=1,\ldots,d$, where $C>0$.
This yields that
 \[
   \langle M^{(i)}\rangle_t \geq C^{-1} \int_0^t \Vert(T-s)^{-A^\top}e_i\Vert^2\,\dd s,
   \qquad t\in[0,T),\;\; i=1,\ldots,d.
 \]
By Theorem 2.2.4 in Meerschaert and Scheffler \cite{MS}, for all $\varepsilon>0$ and $i\in\{1,\ldots,d\}$,
 one can choose a $t_0\in(0,T)$ such that
 \[
   (T-s)^{a-\varepsilon} \Vert (T-s)^{-A^\top}e_i\Vert\geq 1,
   \qquad \forall \,s\in[t_0,T).
 \]
Hence for all $t\geq t_0$, $t\in[0,T)$ and $i\in\{1,\ldots,d\}$,
 \begin{align*}
  \langle M^{(i)}\rangle_t
     \geq C^{-1} \int_0^{t_0} \Vert(T-s)^{-A^\top}e_i\Vert^2\,\dd s
          + C^{-1} \int_{t_0}^t (T-s)^{-2a + 2\varepsilon}\,\dd s
       \to\infty \quad \text{as \ $t\uparrow T$,}
 \end{align*}
 provided that $a-\varepsilon>1/2$.
Since $a>1/2$, one can choose such an $\varepsilon$, which yields that $\lim_{t\uparrow T}\langle M^{(i)}\rangle_t =\infty$ for every $i\in\{1,\ldots,d\}$.
\end{proof}

\begin{remark}\label{rem2}
We conjecture that $\lim_{t\uparrow T}\langle M^{(i)}\rangle_t = \infty$
 for every $i\in\{1,\ldots,d\}$ in case $\RSpec(A) = \{1/2\}$.
However, we cannot address a precise argument.
Note that in dimension 1 this holds; see the proof of Lemma 3.1 in Barczy and Pap \cite{BarPap}.
Fortunately, for proving the bridge property of $(X_t)_{t\in[0,T)}$ we do not need any information about the limit behavior of the quadratic variation process in case $A$ has eigenvalues with real part all equal to $\frac12$, see the proof of Theorem \ref{Theorem1} below.
\end{remark}

Now we are ready to formulate our main result.

\begin{theorem}\label{Theorem1}
Let us suppose that $\Sigma$ has full rank $d$ (and consequently $m\geq d$).
If $\RSpec(A)\subseteq(0,\infty)$, then the process
\begin{equation}\label{mbridge}
\widehat X_t
   :=\begin{cases}
          \displaystyle\int_0^t\Big(\frac{T-t}{T-s}\Big)^A\Sigma\,\dd B_s & \text { if }t\in[0,T),\\
          0 & \text { if }t=T
    \end{cases}
\end{equation}
is a centered Gauss process with almost surely continuous sample paths.
\end{theorem}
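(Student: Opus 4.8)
The plan is to verify the three assertions—centeredness, Gaussianity, and almost sure continuity—separately, the first two being immediate and the last reducing to the bridge property $X_t\to0$ almost surely as $t\uparrow T$. For the Gaussian and centering claims: on $[0,T)$ the process $\widehat X_t=X_t$ is a centered Gauss process, since each $X_t$ is an It\^o integral $\int_0^t((T-t)/(T-s))^A\Sigma\,\dd B_s$ of a deterministic integrand, so all finite-dimensional distributions are centered Gaussian. Appending the deterministic value $\widehat X_T=0$ only augments each finite-dimensional vector by a degenerate (zero-variance) Gaussian coordinate, so $(\widehat X_t)_{t\in[0,T]}$ remains a centered Gauss process. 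Continuity on $[0,T)$ is already known, so the only substantive point is almost sure continuity at $t=T$, i.e.\ that $X_t\to0$ almost surely as $t\uparrow T$.

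By Lemma \ref{lemma_spectral1} and the discussion following it, it suffices to prove $X_t^{[j]}\to0$ a.s.\ for each spectral component, so I may assume $A$ is real spectrally simple with $\RSpec(A)=\{a\}$ for some $a>0$. I would use the decomposition \eqref{martdec}, namely $X_t=(T-t)^AM_t$, and treat each coordinate $M_t^{(i)}$ via the dichotomy that the continuous increasing function $\langle M^{(i)}\rangle_t$ either stays bounded or tends to $\infty$ as $t\uparrow T$. In the bounded case, Lemma \ref{lln1} gives that $M_t$ converges almost surely to a finite limit; choosing $\varepsilon\in(0,a)$ in Lemma \ref{specbound} yields $\|(T-t)^A\|\le K(T-t)^{a-\varepsilon}\to0$, and since an a.s.\ convergent path is a.s.\ bounded, $X_t=(T-t)^AM_t\to0$ a.s.

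In the unbounded case I would combine the law of the iterated logarithm with an upper bound on the quadratic variation. Fix a small $\varepsilon>0$; by \eqref{qvar} and Lemma \ref{specbound}, $\langle M^{(i)}\rangle_t\le K^2\|\Sigma\|^2\int_0^t(T-s)^{-2(a+\varepsilon)}\,\dd s$, which, because $a+\varepsilon>1/2$, grows like $(T-t)^{1-2(a+\varepsilon)}$ as $t\uparrow T$. Lemma \ref{MLIL} then gives, almost surely and for $t$ near $T$, the bound $|M_t^{(i)}|\lesssim\sqrt{\langle M^{(i)}\rangle_t\ln\ln\langle M^{(i)}\rangle_t}\lesssim(T-t)^{1/2-(a+\varepsilon)}\sqrt{\ln\ln(T-t)^{-1}}$. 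Multiplying by the norm bound $\|(T-t)^A\|\le K(T-t)^{a-\varepsilon}$ and combining the coordinates of the component yields $\|X_t\|\lesssim(T-t)^{1/2-2\varepsilon}\sqrt{\ln\ln(T-t)^{-1}}$, which tends to $0$ as soon as $\varepsilon<1/4$. Since $\varepsilon>0$ is free, this forces $X_t\to0$ a.s.

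The main obstacle, and the reason for organizing the argument around the boundedness dichotomy rather than directly invoking Lemma \ref{lemma_quadratic_variation}, is the borderline value $a=1/2$: there Lemma \ref{lemma_quadratic_variation} gives no information on whether $\langle M^{(i)}\rangle_t$ diverges (cf.\ Remark \ref{rem2}). The dichotomy sidesteps this entirely, because both the bounded branch (via Lemma \ref{lln1}) and the unbounded branch (via the exponent bookkeeping above, which even for $a=1/2$ still produces the decisive exponent $1/2-2\varepsilon>0$) lead to the same conclusion. The only quantitative care needed is to keep $\varepsilon$ below $1/4$, so that the polynomial decay $(T-t)^{1/2-2\varepsilon}$ dominates the iterated-logarithm factor.
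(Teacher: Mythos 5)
Your proof is correct, but it takes a genuinely different route from the paper's. The paper argues by trichotomy on the common real part $a$ of the eigenvalues: for $a\in(0,\tfrac12)$ it uses the boundedness of $\langle M^{(i)}\rangle_t$ and Lemma \ref{lln1}; for $a>\tfrac12$ it invokes Lemma \ref{lemma_quadratic_variation} (this is where the full-rank assumption on $\Sigma$ enters, to guarantee $\langle M^{(i)}\rangle_t\to\infty$) and then the martingale SLLN, Lemma \ref{lln2}, with $f(x)=x^\delta$ and the delicate choice $\delta=\frac{2a-1+\varepsilon}{2(2a-1)+\varepsilon}$, checking that $\delta(1-2(a+\varepsilon))+a-\varepsilon\to\tfrac12>0$; only in the borderline case $a=\tfrac12$ does it resort to the per-coordinate bounded/divergent dichotomy. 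You promote that dichotomy to the organizing principle for all $a>0$ and, in the divergent branch, replace Lemma \ref{lln2} by the law of the iterated logarithm, Lemma \ref{MLIL}, which combined with the upper bound \eqref{help12} gives $\vert M^{(i)}_t\vert\lesssim (T-t)^{1/2-(a+\varepsilon)}\sqrt{\ln\ln (T-t)^{-1}}$ and hence the clean exponent $\tfrac12-2\varepsilon$ without any $\delta$-bookkeeping. What your approach buys is notable: since Lemma \ref{MLIL} needs only the branch hypothesis $\langle M^{(i)}\rangle_t\to\infty$ and Lemma \ref{lln1} only the complementary one, you never invoke the second part of Lemma \ref{lemma_quadratic_variation}, so the full-rank assumption on $\Sigma$ is never used --- your argument actually proves the theorem for arbitrary $\Sigma$, strengthening the localization of that hypothesis given in Remark \ref{rem1}. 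One presentational point deserves a line in a final write-up: in the divergent branch you assert $a+\varepsilon>\tfrac12$ for small $\varepsilon$, which presumes $a\geq\tfrac12$; this is harmless because for $a<\tfrac12$ the estimate \eqref{help5} with $\varepsilon<\tfrac12-a$ shows the quadratic variation is bounded, so the divergent branch is then vacuous, but you should say so explicitly. (Also, strictly speaking you transfer the bound on $\langle M^{(i)}\rangle_t$ into the iterated-logarithm factor using the monotonicity of $x\mapsto x\ln\ln x$ for large $x$; this is routine but worth recording.)
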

\begin{remark}\label{rem1}
Note that the condition $\RSpec(A)\subseteq(0,\infty)$ is equivalent to $t^A\to0\in\RR^{d\times d}$ as $t\downarrow0$.
 We call the attention that the condition that $\Sigma$ has full rank $d$ in Theorem \ref{Theorem1}
 is needed only for the case $\RSpec(A)\cap [1/2,\infty) \neq \emptyset$; see the proof given below. Moreover, as mentioned in the Introduction, the assumption that $\Sigma$ has full rank $d$ is only a minor restriction to the generality of Theorem \ref{Theorem1}.%
\end{remark}

\begin{proof}[Proof of Theorem \ref{Theorem1}]
The process $(\widehat X_t)_{t\in[0,T]}$ given by \eqref{mbridge} is a centered Gauss process even without the restriction on $A$, see, e.g., Karatzas and Shreve \cite[Problem 5.6.2]{KS}.
As explained in Section 2.1, we only need to consider real spectrally simple matrices $A$ with $\RSpec(A)=\{a\}$ for some $a>0$.
We distinguish between the following three cases.

{\it Case 1:} $a\in(0,\frac12)$. By Lemma \ref{lemma_quadratic_variation}, for all \ $i\in\{1,\ldots,d\}$
 the quadratic variation process $(\langle M^{(i)}\rangle_t)_{t\in[0,T)}$ is bounded.
Hence, since it is non-decreasing, we have
 $\lim_{t\uparrow T}\langle M^{(i)}\rangle_t$ exists and is finite for every $i=1,\ldots,d$.
 An application of Lemma \ref{lln1} shows that $\lim_{t\uparrow T}M_t$ exists almost surely.
Since $a>0$, by Lemma \ref{specbound}, $(T-t)^A\to0\in\RR^{d\times d}$ as $t\uparrow T$ and by \eqref{martdec}
 we get $\widehat X_t=(T-t)^AM_t\to0 = \widehat X_T$ almost surely as $t\uparrow T$.

{\it Case 2:} $a>\frac12$.
Let $\varepsilon>0$ and $\delta:=\frac{2a-1+\varepsilon}{2(2a-1)+\varepsilon}$.
We define $f:[1,\infty)\to\RR$ by $f(x):=x^{\delta}$ for $x\geq1$.
Then $\delta>\frac12$, and $\int_1^\infty f(x)^{-2}\,dx = (2\delta -1)^{-1}<\infty$.
By Lemma \ref{lemma_quadratic_variation}, we have
 $\langle M^{(i)}\rangle_t\to\infty$ as $t\uparrow T$, and using Lemma \ref{lln2} we get
 $M^{(i)}_t/f(\langle M^{(i)}\rangle_t)\to 0$ as $t\uparrow T$ almost surely
 for every $i=1,\ldots,d$.
By \eqref{martdec}, we have for sufficiently large $t$
 \begin{align*}
 X_t^{(i)} & =\sum_{j=1}^d\big((T-t)^A\big)_{ij}f(\langle M^{(j)}\rangle_t)\,\frac{M_t^{(j)}}{f(\langle M^{(j)}\rangle_t)}
 \end{align*}
 for every $i=1,\ldots,d$.
To prove that $\widehat X_t\to 0 = \widehat X_T$ as $t\uparrow T$ almost surely,
 it is enough to show that $\big((T-t)^A\big)_{ij}f(\langle M^{(j)}\rangle_t)$,
 $t\in[0,T)$, is bounded for every $i,j\in\{1,\ldots,d\}$.
Since any two norms on $\RR^{d\times d}$ are equivalent,
 one can choose a constant $C>0$ such that for every matrix $D\in\RR^{d\times d}$,
 \[
   \Vert D\Vert_1:=\max_{j=1,\ldots,d} \sum_{i=1}^d \vert d_{i,j}\vert
    \leq C \Vert D\Vert.
 \]
Similarly to \eqref{help5}, by Lemma \ref{specbound} with $a_j=a>\frac12$, we get for some constant $K>0$,
 \begin{align}\label{help12}
 \langle M^{(j)}\rangle_t\leq K^2
   m\Vert\Sigma\Vert^2\,\frac{T^{1-2(a+\varepsilon)}-(T-t)^{1-2(a+\varepsilon)}}{1-2(a+\varepsilon)}
    \leq K^2 m\Vert\Sigma\Vert^2\,\frac{(T-t)^{1-2(a+\varepsilon)}}{2(a+\varepsilon)-1}
 \end{align}
 for $t\in[0,T)$ and $j=1,\ldots,d$.
Then
 \begin{align*}
  \big|\big((T-t)^A\big)_{ij}f(\langle M^{(j)}\rangle_t)\big|
  & \leq \sum_{i=1}^d\big|\big((T-t)^A\big)_{ij}\big|\,\big|f(\langle M^{(j)}\rangle_t)\big|\\
  & \leq \|(T-t)^A\|_1\,f\Big(K^2 m\Vert\Sigma\Vert^2\,\frac{(T-t)^{1-2(a+\varepsilon)}}{2(a+\varepsilon)-1}\Big)\\
  &\leq C \Vert (T-t)^A\Vert \left(\frac{ K^2m\Vert\Sigma\Vert^2}{2(a+\varepsilon) -1}\right)^\delta
          (T-t)^{\delta(1-2(a+\varepsilon))} \\
  & \leq \widetilde C (T-t)^{\delta(1-2(a+\varepsilon)) + a-\varepsilon},
  \quad t\in[0,T)
 \end{align*}
 for some constant $\widetilde C>0$ (depending on $\varepsilon$) and $i,j\in\{1,\ldots,d\}$,
 where the last inequality follows again by Lemma \ref{specbound} with $a_j:=a$.
Our choice of $\delta>0$ yields that
 \begin{align*}
  \delta(1-2(a+\varepsilon)) + a-\varepsilon
    &= a-\varepsilon - \frac{(2(a+\varepsilon) -1)(2a-1+\varepsilon)}
                {2(2a + \varepsilon/2 -1)}\\
    &\to a - \frac{(2a-1)^2}{2(2a-1)}
        = \frac{1}{2}>0 \qquad \text{as $\varepsilon\downarrow 0$,}
 \end{align*}
 which shows the desired boundedness of $\big((T-t)^A\big)_{ij}f(\langle M^{(j)}\rangle_t)$,
 $t\in[0,T)$, for sufficiently small \ $\varepsilon>0$.

{\it Case 3:} $a=\frac12$.
Decompose $\{1,\ldots,d\}=\cI_1\cup \cI_2$ in such a way that the (deterministic) quadratic variation process
 $(\langle M^{(i)}\rangle_t)_{t\in[0,T)}$ is bounded for every $i\in \cI_1$ and $\langle M^{(i)}\rangle_t\to\infty$
 as $t\uparrow T$ for every $i\in \cI_2$.  
For $i\in\{1,\ldots,d\}$ we obtain for sufficiently large $t$
\begin{align*}
X_t^{(i)} & =\sum_{j=1}^d\big((T-t)^A\big)_{ij}M_t^{(j)}\\
& =\sum_{j\in \cI_1}\big((T-t)^A\big)_{ij}M_t^{(j)}+\sum_{j\in \cI_2}\big((T-t)^A\big)_{ij}
       f(\langle M^{(j)}\rangle_t)\,\frac{M_t^{(j)}}{f(\langle M^{(j)}\rangle_t)}.
\end{align*}
Since $a>0$, by Lemma \ref{specbound}, $(T-t)^A\to 0\in\RR^{d\times d}$ as $t\to T$, and hence
 as in Case 1, we get that the first sum on the right-hand side tends to 0 as $t\uparrow T$ almost surely.
The second sum on the right-hand side tends to $0$ as well, since $\delta=(2a-1+\varepsilon)/(2(2a-1)+\varepsilon) =1 > 1/2$,
 $1-2(a+\varepsilon) = -2\varepsilon<0$ and $\delta(1-2(a+\varepsilon)) + a - \varepsilon = 1/2- 3\varepsilon>0$
 for $\varepsilon\in(0,1/6)$, and one can apply the method of Case 2 described above.
\end{proof}

\section{Asymptotic behavior of the bridge}\label{Section_asymptotic}

In this section we study asymptotic behavior of the sample paths of the operator scaled Wiener bridge
 $(X_t)_{t\in[0,T)}$ given by \eqref{msde} with initial condition $X_0=0$.

Our first result is a partial generalization of Theorem 3.4 in Barczy and Pap \cite{BarPap}.

\begin{prop}\label{prop_sample_path}
If $\RSpec(A)\subseteq(0,1/2)$, then
 \begin{align}\label{help2}
    P\Big(\lim_{t\uparrow T} (T-t)^{-A}X_t = M_T \Big)=1,
 \end{align}
 where $M_T$ is a $d$-dimensional normally distributed random variable.
Consequently, for all $\widetilde A\in\RR^{d\times d}$ with $A\widetilde A = \widetilde A A$, we have
 \begin{align}\label{help3a}
    P\Big(\lim_{t\uparrow T} (T-t)^{-\widetilde A}X_t = 0 \Big)=1 \; & \text{ if \ $\RSpec(A - \widetilde A)\subseteq(0,\infty)$,}\\
     P\Big(\lim_{t\uparrow T} \Vert (T-t)^{-\widetilde A}X_t \Vert = \infty\Big) =1
        \; & \text{if \ $\RSpec(A - \widetilde A)\subseteq(-\infty,0)$.}\label{help3b}
 \end{align}
\end{prop}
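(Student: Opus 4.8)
The plan is to recognize that the rescaled process in \eqref{help2} is nothing but the martingale $M$ from \eqref{martdec}. Indeed, \eqref{martdec} gives $X_t=(T-t)^AM_t$, and since $(T-t)^{-A}(T-t)^A=I_d$ we obtain $(T-t)^{-A}X_t=M_t$ for every $t\in[0,T)$. Under $\RSpec(A)\subseteq(0,1/2)$, Lemma \ref{lemma_quadratic_variation} shows that each $(\langle M^{(i)}\rangle_t)_{t\in[0,T)}$ is bounded; being non-decreasing, it has a finite limit as $t\uparrow T$, so Lemma \ref{lln1} yields that $\lim_{t\uparrow T}M_t=:M_T$ exists almost surely. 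Each $M_t$ is a centered Gaussian vector (a Wiener integral of a deterministic integrand), and an almost sure—hence distributional—limit of centered Gaussian vectors is again centered Gaussian, so $M_T\sim N(0,\Gamma)$ with $\Gamma=\lim_{t\uparrow T}\int_0^t(T-s)^{-A}\Sigma\Sigma^\top(T-s)^{-A^\top}\,\dd s$, finite by the boundedness of the quadratic variations. This proves \eqref{help2}.

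For the two consequences, the commutativity $A\widetilde A=\widetilde AA$ is the crucial structural input: for commuting matrices $P,Q$ one has $r^{P}r^{Q}=r^{P+Q}$ (immediate from \eqref{help1}, since $P\log r$ and $Q\log r$ commute), so for $t\in[0,T)$,
\[
(T-t)^{-\widetilde A}X_t=(T-t)^{-\widetilde A}(T-t)^{A}M_t=(T-t)^{A-\widetilde A}M_t.
\]
Writing $B:=A-\widetilde A$, I would prove \eqref{help3a} as follows: by Remark \ref{rem1} the condition $\RSpec(B)\subseteq(0,\infty)$ is equivalent to $(T-t)^{B}\to0$ as $t\uparrow T$, and combining $\Vert(T-t)^{B}M_t\Vert\le\Vert(T-t)^{B}\Vert\,\Vert M_t\Vert$ with the almost sure convergence $M_t\to M_T$ (so $\Vert M_t\Vert$ stays bounded almost surely) gives $(T-t)^{B}M_t\to0$ almost surely.

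For \eqref{help3b} I would argue from below rather than splitting off the limit. Using $M_t=(T-t)^{-B}\big((T-t)^{B}M_t\big)$ and taking norms gives
\[
\Vert(T-t)^{B}M_t\Vert\ge\frac{\Vert M_t\Vert}{\Vert(T-t)^{-B}\Vert},\qquad t\in[0,T).
\]
Here $\RSpec(B)\subseteq(-\infty,0)$ means $\RSpec(-B)\subseteq(0,\infty)$, so by Remark \ref{rem1} again $\Vert(T-t)^{-B}\Vert\to0$ as $t\uparrow T$, while $\Vert M_t\Vert\to\Vert M_T\Vert$ almost surely. Hence on the event $\{M_T\ne0\}$ the right-hand side tends to $+\infty$, and \eqref{help3b} follows once $P(M_T\ne0)=1$ is known.

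The only genuinely delicate point is this last fact. Since $M_T\sim N(0,\Gamma)$ with $\Gamma$ as above, $P(M_T=0)>0$ would force $\Gamma=0$, i.e.\ $(T-s)^{-A}\Sigma=0$ for almost every $s$; as $(T-s)^{-A}$ is invertible this is excluded whenever $\Sigma\ne0$, so $M_T\ne0$ almost surely (even if $\Gamma$ is singular, $M_T$ is supported on a subspace of dimension at least one and places no mass at the origin). This nondegeneracy of the driving noise is the step I expect to require the most care to state cleanly; the remaining ingredients—the identity $r^{-\widetilde A}r^{A}=r^{A-\widetilde A}$, the two norm inequalities, and the growth dichotomy for $(T-t)^{\pm B}$—are routine given \eqref{martdec} and Remark \ref{rem1}.
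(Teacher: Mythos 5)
Your proof is correct. For \eqref{help2} and \eqref{help3a} it coincides with the paper's argument: the identity $(T-t)^{-A}X_t=M_t$ from \eqref{martdec}, boundedness of the quadratic variations (Lemma \ref{lemma_quadratic_variation}), Lemma \ref{lln1} for the almost sure existence of $M_T$, preservation of Gaussianity under distributional limits (the paper cites Theorem 3.1.16 of \cite{MS} for this), commutativity to get $(T-t)^{-\widetilde A}X_t=(T-t)^{A-\widetilde A}M_t$, and $(T-t)^{A-\widetilde A}\to0$ via Remark \ref{rem1}. Where you genuinely diverge is \eqref{help3b}: the paper works $\omega$-wise, choosing a compact ball around $M_T(\omega)$ avoiding the origin, trapping $M_t(\omega)$ in it for $t$ near $T$, and invoking Theorem 2.2.4 of Meerschaert and Scheffler \cite{MS} (with $\alpha=0$) for uniform divergence of $\Vert s^{-(A-\widetilde A)}x\Vert$ on that compact set; you instead use the elementary lower bound $\Vert (T-t)^{B}M_t\Vert\geq\Vert M_t\Vert/\Vert(T-t)^{-B}\Vert$ with $B=A-\widetilde A$, together with $\Vert(T-t)^{-B}\Vert\to0$ (Remark \ref{rem1} applied to $-B$), which replaces the uniformity-on-compacts machinery by the inequality $\Vert Qx\Vert\geq\Vert x\Vert/\Vert Q^{-1}\Vert$ for invertible $Q$ and is, to my mind, simpler while proving exactly the same conclusion. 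Both routes hinge on $P(M_T=0)=0$; a real merit of your write-up is that you actually justify this (if $\Gamma\neq0$, the centered Gaussian $M_T$ charges no single point, and $\Gamma=0$ would force $(T-s)^{-A}\Sigma=0$, impossible for $\Sigma\neq0$ since $(T-s)^{-A}$ is invertible), whereas the paper asserts it without comment. You are also right that this step silently requires $\Sigma\neq0$, a hypothesis absent from the proposition as stated (for $\Sigma=0$ one has $X\equiv0$ and \eqref{help3b} fails); this caveat applies equally to the paper's proof and is harmless under its standing assumptions on $\Sigma$.
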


\begin{proof}
By \eqref{martdec},
 \[
   (T-t)^{-A} X_t = M_t = \int_0^t (T-s)^{-A}\Sigma\,\dd B_s,\quad t\in[0,T),
 \]
 and since $\RSpec(A)\subseteq(0,1/2)$, by the proof of Theorem \ref{Theorem1} (Case 1), we have $M_T:=\lim_{t\uparrow T} M_t$
 exists almost surely.
Since $M_t$ is a $d$-dimensional normally distributed random variable for all $t\in[0,T)$,
 and normally distributed random variables can converge in distribution only to a normally distributed random variable
 (following directly, e.g., from Theorem 3.1.16 in Meerschaert and Scheffler \cite{MS}),
 we have that $M_T$ is normally distributed, which yields \eqref{help2}.
Hence for all $A,\widetilde A\in\RR^{d\times d}$ with $A\widetilde A = \widetilde A A$
 we have
 \begin{align}\label{help7}
    (T-t)^{-\widetilde A} X_t = (T-t)^{A-\widetilde A} M_t,\quad t\in[0,T).
 \end{align}

If $\RSpec(A - \widetilde A)\subseteq(0,\infty)$, then, by Remark \ref{rem1},
 \[
  \lim_{t\uparrow T}(T-t)^{A-\widetilde A} = 0\in\RR^{d\times d},
 \]
 and together with $P(\lim_{t\uparrow T} M_t = M_T)=1$, we have \eqref{help3a}.

If $\RSpec(A - \widetilde A)\subseteq(-\infty,0)$, then 
by \eqref{help7}, $\Vert (T-t)^{-\widetilde A} X_t\Vert = \Vert (T-t)^{A-\widetilde A} M_t\Vert$ for $t\in[0,T)$.
Since $P(M_T=0)=0$, for almost every $\omega\in\Omega$ one can choose  $\eta(\omega)>0$ such that
 $0\notin \{x\in\RR^d : \Vert M_T(\omega) - x\Vert <\eta(\omega)\}$.
Further, since \ $\lim_{t\uparrow T} M_t = M_T$ \ almost surely one can choose a $t_0\in[0,T)$ such that
 $M_t(\omega)\in\{x\in\RR^d : \Vert M_T(\omega) - x\Vert <\eta(\omega)\}$ for $t\in[t_0,T)$.
Using Theorem 2.2.4 in \cite{MS} with $\alpha=0$ and with the compact set
 $S(\omega):=\{x\in\RR^d : \Vert M_T(\omega) - x\Vert \leq \eta(\omega)\}$,
 we have $\Vert s^{-(A-\widetilde A)} x\Vert\to \infty$ as $s\to\infty$ for all $x\in S(\omega)$,
 and hence $\Vert (T-t)^{A-\widetilde A} M_t(\omega)\Vert\to \infty$ as $t\uparrow T$.
This yields \eqref{help3b}.
\end{proof}

Recall the spectral decomposition of the process $(X_t)_{t\in[0,T)}$, see Lemma \ref{lemma_spectral1}.
For the spectral components, one can get the following precise asymptotic result.

\begin{theorem}
If $\RSpec(A)\subseteq(0,\infty)$ and $\Sigma$ has full rank $d$ (and consequently $m\geq d$),
 then for all $\varepsilon>0$,
 \begin{align}\label{help8}
    P\Big(\lim_{t\uparrow T} (T-t)^{-\min(a_j,1/2) + \varepsilon} \Vert X_t^{[j]}\Vert = 0 \Big)=1,\\\label{help9}
    P\Big(\limsup_{t\uparrow T} (T-t)^{-\min(a_j,1/2) - \varepsilon} \Vert X_t^{[j]}\Vert = \infty \Big)=1,
 \end{align}
 where \ $a_1<\cdots<a_p$ \ denote the distinct real parts of the eigenvalues of $A$ and $(X_t^{[j]})_{t\in[0,T)}$,
 $j=1,\ldots,p$,  are the corresponding spectral components of $(X_t)_{t\in[0,T)}$, see Section \ref{sebsection_spectral}.
Further, if $\RSpec(A)\subseteq(0,1/2)$, then \eqref{help9} can be strengthened to
 \begin{align}\label{help10}
  P\Big(\lim_{t\uparrow T} (T-t)^{-a_j - \varepsilon} \Vert X_t^{[j]}\Vert = \infty \Big)=1.
 \end{align}
\end{theorem}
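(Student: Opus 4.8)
The plan is to invoke the spectral reduction one more time and then split the analysis according to the position of $a_j$ relative to $1/2$. By Lemma \ref{lemma_spectral1} the $j$-th component $(X_t^{[j]})_{t\in[0,T)}$ is itself an operator scaled process driven by the real spectrally simple matrix $A_j$ (with $\RSpec(A_j)=\{a_j\}$) and the full rank matrix $\Sigma_j$, so it admits the decomposition $X_t^{[j]}=(T-t)^{A_j}M_t^{[j]}$ with $M_t^{[j]}=\int_0^t(T-s)^{-A_j}\Sigma_j\,\dd B_s$. Hence I would fix $j$, drop the superscript $[j]$, write $M_t^{(i)}$ for the coordinates of $M_t$, and treat the three cases $a_j<1/2$, $a_j>1/2$ and $a_j=1/2$; the strengthening \eqref{help10} concerns precisely the components with $a_j<1/2$.

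For the upper estimate \eqref{help8} I would argue as in the proof of Theorem \ref{Theorem1}. When $a_j<1/2$ the martingale $M_t$ converges a.s.\ to a nondegenerate Gaussian limit $M_T$, so $\|X_t^{[j]}\|\le\|(T-t)^{A_j}\|\,\|M_t\|$ together with Lemma \ref{specbound} gives $(T-t)^{-a_j+\varepsilon}\|X_t^{[j]}\|\le K(T-t)^{\varepsilon-\eta}\|M_t\|\to0$ for $\eta<\varepsilon$. When $a_j\ge1/2$ one repeats Cases 2 and 3 of Theorem \ref{Theorem1}: writing $X_t^{(i)}=\sum_k\big((T-t)^{A_j}\big)_{ik}f(\langle M^{(k)}\rangle_t)\,M_t^{(k)}/f(\langle M^{(k)}\rangle_t)$ with $f(x)=x^\delta$ as in that proof and carrying the extra factor $(T-t)^{-1/2+\varepsilon}$, the computation there yields $\big((T-t)^{A_j}\big)_{ik}f(\langle M^{(k)}\rangle_t)=O\big((T-t)^{g(\eta)}\big)$ with $g(\eta)\to1/2$ as $\eta\downarrow0$, so the product with $(T-t)^{-1/2+\varepsilon}$ tends to $0$ once $\eta$ is small relative to $\varepsilon$; the bounded quadratic variation coordinates arising when $a_j=1/2$ are disposed of as in Case 1.

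The strengthening \eqref{help10}, and hence \eqref{help9} when $a_j<1/2$, I would deduce directly from Proposition \ref{prop_sample_path}. Since $(T-t)^{-a_j-\varepsilon}\|X_t^{[j]}\|=\|(T-t)^{A_j-(a_j+\varepsilon)I_{d_j}}M_t\|$ and $\RSpec\big(A_j-(a_j+\varepsilon)I_{d_j}\big)=\{-\varepsilon\}\subseteq(-\infty,0)$, applying \eqref{help3b} to the subprocess governed by $A_j$ and $\widetilde A=(a_j+\varepsilon)I_{d_j}$ (which is legitimate because $\RSpec(A_j)\subseteq(0,1/2)$ and $\Sigma_j$ has full rank) gives the claimed divergence to $\infty$. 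For the remaining lower bound \eqref{help9} with $a_j>1/2$ I would use the law of the iterated logarithm. From $\|M_t\|\le\|(T-t)^{-A_j}\|\,\|X_t^{[j]}\|$ and Lemma \ref{specbound} one gets $\|X_t^{[j]}\|\ge K^{-1}(T-t)^{a_j+\eta_1}|M_t^{(i)}|$ for any coordinate $i$; by Lemma \ref{lemma_quadratic_variation} each $\langle M^{(i)}\rangle_t\to\infty$, so Lemma \ref{MLIL} produces a sequence $t_n\uparrow T$ with $|M_{t_n}^{(i)}|\gtrsim\sqrt{\langle M^{(i)}\rangle_{t_n}\ln\ln\langle M^{(i)}\rangle_{t_n}}$, while the proof of Lemma \ref{lemma_quadratic_variation} supplies $\langle M^{(i)}\rangle_t\ge K_2(T-t)^{1-2(a_j-\eta_2)}$. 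Combining these, $(T-t_n)^{-1/2-\varepsilon}\|X_{t_n}^{[j]}\|\gtrsim(T-t_n)^{\eta_1+\eta_2-\varepsilon}\sqrt{\ln\ln(\cdots)}\to\infty$ whenever $\eta_1+\eta_2<\varepsilon$, which yields $\limsup=\infty$.

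The \emph{main obstacle} is the lower bound \eqref{help9} in the borderline case $a_j=1/2$, since Lemma \ref{lemma_quadratic_variation} says nothing there (cf.\ Remark \ref{rem2}) and no coordinate is known to have divergent quadratic variation. I would resolve this by a trace argument. Because $\sum_i\langle M^{(i)}\rangle_t=\int_0^t\|(T-s)^{-A_j}\Sigma_j\|_{\mathrm{HS}}^2\,\dd s$ and the full rank of $\Sigma_j$ makes $\Sigma_j^\top$ injective (so $\|\Sigma_j^\top w\|\ge c\|w\|$), one obtains $\|(T-s)^{-A_j}\Sigma_j\|_{\mathrm{HS}}=\|\Sigma_j^\top(T-s)^{-A_j^\top}\|_{\mathrm{HS}}\ge c\,\|(T-s)^{-A_j^\top}\|\ge c\,(T-s)^{-1/2}$, the last inequality being the spectral radius bound, valid since every eigenvalue of $(T-s)^{-A_j^\top}$ has modulus $(T-s)^{-1/2}$. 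Hence $\sum_i\langle M^{(i)}\rangle_t\ge c^2\int_0^t(T-s)^{-1}\,\dd s\to\infty$, so at least one coordinate $i$ satisfies $\langle M^{(i)}\rangle_t\to\infty$. Applying Lemma \ref{MLIL} to that coordinate together with $\|X_t^{[j]}\|\ge K^{-1}(T-t)^{1/2+\eta_1}|M_t^{(i)}|$, the polynomial factor $(T-t_n)^{\eta_1-\varepsilon}\to\infty$ (with $\eta_1<\varepsilon$) already forces $\limsup_{t\uparrow T}(T-t)^{-1/2-\varepsilon}\|X_t^{[j]}\|=\infty$, completing the case $a_j=1/2$.
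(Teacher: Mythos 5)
Your proposal is correct, and for most of the statement it reproduces the paper's own proof: the same spectral reduction to a real spectrally simple $A_j$ via Lemma \ref{lemma_spectral1}, the same derivation of \eqref{help8} and \eqref{help10} for $a_j<1/2$ from Proposition \ref{prop_sample_path} with $\widetilde A=(a_j\pm\varepsilon)I_{d_j}$, the same SLLN-with-$f(x)=x^\delta$ bookkeeping for \eqref{help8} when $a_j\geq 1/2$ (the paper phrases it through $\sup_{\Vert\theta\Vert=1}\Vert\theta^\top(T-t)^{A}\Vert$ and \eqref{help11} rather than entrywise, but the exponent computation is identical), and the same LIL-plus-quadratic-variation lower bound for \eqref{help9} when $a_j>1/2$ (the paper works with $\Vert\cdot\Vert_1$ and sums all coordinates; your single-coordinate version via $\Vert M_t\Vert\le\Vert(T-t)^{-A_j}\Vert\,\Vert X_t^{[j]}\Vert$ is the same inequality — just note the implicit constraint $\eta_2<a_j-1/2$ needed so that the integral in Lemma \ref{lemma_quadratic_variation} actually diverges, which the paper enforces via $\varepsilon'\in(0,a-1/2)$). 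Where you genuinely depart from the paper is the borderline case $a_j=1/2$ of \eqref{help9}. The paper splits $\{1,\ldots,d\}=\cI_1\cup\cI_2$ by bounded versus divergent quadratic variation and handles the bounded-variation coordinates too: by Lemma \ref{lln1} their martingale coordinates converge, and since $\Vert(T-t)^{-A+(1/2+\varepsilon)I_d}\Vert_1^{-1}\to\infty$ the first sum already diverges (this tacitly uses that the Gaussian limit is nonzero a.s., as in the proof of \eqref{help3b}); in particular the paper's argument does not need $\cI_2\neq\emptyset$. You instead prove $\cI_2\neq\emptyset$ outright by a trace argument — $\sum_i\langle M^{(i)}\rangle_t=\int_0^t\Vert(T-s)^{-A_j}\Sigma_j\Vert_{\mathrm{HS}}^2\,\dd s$, the full-rank bound $\Vert\Sigma_j^\top w\Vert\ge c\Vert w\Vert$, and the spectral-radius bound $\Vert(T-s)^{-A_j^\top}\Vert\ge(T-s)^{-1/2}$ — and then run the LIL on a single divergent coordinate, making the $\cI_1$ coordinates irrelevant. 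All three steps of your trace argument check out ($\Vert\Sigma_j^\top B\Vert_{\mathrm{HS}}\ge c\Vert B\Vert_{\mathrm{HS}}\ge c\Vert B\Vert$, and every eigenvalue of $(T-s)^{-A_j^\top}$ has modulus $(T-s)^{-1/2}$ by spectral mapping), so your route is valid; it is arguably cleaner at the borderline, avoids invoking the a.s. nonvanishing of $M_T$, and as a by-product it establishes that when $\RSpec(A)=\{1/2\}$ and $\Sigma$ has full rank at least one coordinate has $\langle M^{(i)}\rangle_t\to\infty$ — a partial step toward the conjecture in Remark \ref{rem2}, about which the paper says it "cannot address a precise argument."
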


\begin{proof}%
As explained in Section 2.1, we only need to consider real spectrally simple matrices $A$ with $\RSpec(A)=\{a\}$ for some $a>0$.
Then $p=1$ and $X^{[1]}=X$.
Note that if $\Sigma$ has full rank $d$, then $\Sigma_j$ has full rank $d_j$ for all $j=1,\ldots,p$.

{\it Case 1:} $a\in(0,\frac12)$. To prove \eqref{help8} we use \eqref{help3a} with $\widetilde A:=(a-\varepsilon)I_d$.
Then $A\widetilde A = \widetilde A A$ and $\RSpec(A - \widetilde A) = \{\varepsilon \}\subseteq(0,\infty)$.
 Hence we have $P(\lim_{t\uparrow T} (T-t)^{-a+\varepsilon}X_t =0 )=1$ by \eqref{help3a}.
To prove \eqref{help10} we apply \eqref{help3b} with $\widetilde A:=(a+\varepsilon)I_d$.
Then $A\widetilde A = \widetilde A A$ and $\RSpec(A - \widetilde A) = \{-\varepsilon \}\subseteq(-\infty,0)$. Hence $P(\lim_{t\uparrow T} (T-t)^{-(a+\varepsilon)} \Vert X_t\Vert = \infty )=1$ by \eqref{help3b}.

{\it Case 2:} $a>\frac12$.
We use the well-known equality
 \[
   \Vert x\Vert = \sup_{\theta\in\RR^d, \Vert\theta\Vert=1} \vert\langle x,\theta\rangle\vert,
   \qquad x\in\RR^d,
  \]
 where $\langle \cdot,\cdot\rangle$ denotes the usual Euclidean inner product on $\RR^d$.
Hence to prove \eqref{help8} it is sufficient to show
 \[
    P\left(\lim_{t\uparrow T} (T-t)^{-1/2 + \varepsilon}
       \sup_{\theta\in\RR^d, \Vert\theta\Vert=1} \vert\langle X_t,\theta\rangle\vert
        = 0\right)=1.
 \]
First, we verify that for all $\varepsilon>0$ there exists some $t_0\in[0,T)$ such that
 \begin{align}\label{help11}
  (T-t)^{a+\varepsilon}
    \leq \sup_{\theta\in\RR^d, \Vert\theta\Vert=1} \Vert \theta^\top (T-t)^A  \Vert
     \leq (T-t)^{a-\varepsilon},
    \qquad t\in[t_0,T).
 \end{align}
Indeed, using Theorem 2.2.4 in \cite{MS} with $\alpha:=-(a+\varepsilon)$ and $\beta:=-(a-\varepsilon)$, respectively, we get
 \begin{align*}
  &\lim_{t\uparrow T} \Big(((T-t)^{-1})^{-\alpha} \sup_{\theta\in\RR^d, \Vert\theta\Vert=1}
                      \Vert ((T-t)^{-1})^{-A^\top} \theta \Vert \Big) =\infty,\\
  &\lim_{t\uparrow T} \Big(((T-t)^{-1})^{-\beta} \sup_{\theta\in\RR^d, \Vert\theta\Vert=1}
                      \Vert ((T-t)^{-1})^{-A^\top} \theta \Vert \Big) =0.
 \end{align*}
This yields \eqref{help11} taking into account that $\Vert ((T-t)^{-1})^{-A^\top} \theta\Vert = \Vert \theta^\top (T-t)^A\Vert$,
 $t\in[0,T)$.
Note also that for all $t\in[t_0,T)$ and $\theta\in\RR^d$ with $\Vert\theta\Vert=1$,
 \begin{align}\label{help15}
   \langle X_t,\theta\rangle
     = \theta^\top X_t
     = \theta^\top (T-t)^A M_t
     = \Vert \theta^\top (T-t)^A \Vert \frac{\theta^\top (T-t)^A}{\Vert \theta^\top (T-t)^A \Vert} M_t.
 \end{align}
Due to $a>1/2$ and $\Sigma$ has full rank $d$, by Lemma \ref{lemma_quadratic_variation} we have
 $\lim_{t\uparrow T} \langle M^{(i)}\rangle_t = \infty$ for $i=1,\ldots,d$, and hence, by \eqref{help15},
 for sufficiently large $t$,
 \begin{align*}
  &(T-t)^{-\min(a,1/2) + \varepsilon} \sup_{\theta\in\RR^d, \Vert\theta\Vert=1} \langle X_t,\theta\rangle\\
  &\quad \leq (T-t)^{-1/2 + \varepsilon} \sum_{i=1}^d
      \sup_{\theta\in\RR^d, \Vert\theta\Vert=1} \left( \Vert \theta^\top (T-t)^A\Vert
      \left\vert \left(\frac{\theta^\top (T-t)^A}{\Vert\theta^\top (T-t)^A\Vert}\right)_i \right\vert\right)\\
  &\phantom{\quad\leq(T-t)^{-1/2 + \varepsilon} \sum_{i=1}^d }\times
      f(\langle M^{(i)}\rangle_t)
      \frac{\vert M^{(i)}_t \vert}{f(\langle M^{(i)}\rangle_t)},
 \end{align*}
 where $f(x):=x^\delta$, $x\geq 1$, with $\delta:=\frac{2a-1+\varepsilon'}{2(2a-1) + \varepsilon'}>1/2$ for some $\varepsilon'>0$.
Note that, by the proof of Theorem \ref{Theorem1} (Case 2),
 $\int_1^\infty (f(x))^{-2}\,\dd x<\infty$.
Using the strong law of large numbers for continuous local martingales (see, e.g., Lemma \ref{lln1}) we have
 \[
    P\left(\lim_{t\uparrow T} \frac{ M^{(i)}_t }{f(\langle M^{(i)}\rangle_t)} =0 \right)=1, \qquad i=1,\ldots,d.
 \]
Hence in order to show \eqref{help8}, it is sufficient to check that the function
 \begin{align}\label{help13}
  (T-t)^{-1/2 + \varepsilon} f(\langle M^{(i)}\rangle_t)
      \sup_{\theta\in\RR^d, \Vert\theta\Vert=1} \Vert \theta^\top (T-t)^A\Vert,
      \qquad t\in[0,T),
 \end{align}
 is bounded for $i=1,\ldots,d$.
By \eqref{help12} and \eqref{help11}, for all $\varepsilon''>0$ and $\varepsilon'''>0$, there exist some $t_0\in[0,T)$ and
 a constant $\widetilde C>0$ (depending on $\varepsilon'$ and $\varepsilon'''$) such that
 \begin{align*}
  (T-t)^{-1/2 + \varepsilon} f(\langle M^{(i)}\rangle_t)
   &\sup_{\theta\in\RR^d, \Vert\theta\Vert=1} \Vert \theta^\top (T-t)^A \Vert\\
   &\leq (T-t)^{-1/2 + \varepsilon} (T-t)^{a-\varepsilon''} \langle M^{(i)}\rangle_t^\delta\\
   &\leq \widetilde C (T-t)^{a-1/2 + \varepsilon - \varepsilon''} (T-t)^{\delta(1-2(a+\varepsilon'''))}\\
   &= \widetilde C (T-t)^{\widetilde\delta},
   \qquad t\in[t_0,T),
 \end{align*}
 where
 \[
   \widetilde\delta:= a-1/2 + \varepsilon - \varepsilon'' +  \frac{(2a-1+\varepsilon')(1-2(a+\varepsilon'''))}{2(2a-1) + \varepsilon'}.
 \]
Let us choose $\varepsilon':=\varepsilon/2$, $\varepsilon'':=\varepsilon'/2$ and $\varepsilon''':=\varepsilon'/4$, where $\varepsilon>0$.
Then
 \begin{align*}
  \widetilde\delta
  & = a - \frac{1}{2} + \varepsilon - \frac{\varepsilon'}{2}
       - \frac{(2a - 1 + \varepsilon')(2a - 1 + \varepsilon'/2)}{2(2a - 1 + \varepsilon'/2)}
    = a - \frac{1}{2} + \varepsilon - \frac{\varepsilon'}{2} - \left( a - \frac{1}{2} + \frac{\varepsilon'}{2}\right)\\
  & = \varepsilon - \frac{\varepsilon'}{2} - \frac{\varepsilon'}{2}
    = \varepsilon - \varepsilon'=\frac{\varepsilon}{2}>0,
 \end{align*}
 which yields that the function given in \eqref{help13} is bounded for all $i=1,\ldots,d$.

Note that the above argument for proving \eqref{help8} works also in the case $a=1/2$ for indices $i\in\{1,\ldots,d\}$
 with $\lim_{t\uparrow T} \langle M^{(i)}\rangle_t = \infty$.

Now we turn to prove \eqref{help9}.
Recall that for a vector $x\in\RR^d$ and a matrix $A\in\RR^{d\times d}$,
 \[
   \Vert x\Vert_1 = \sum_{i=1}^d\vert x_i\vert
   \quad \text{and}\quad
   \Vert A\Vert_1 = \max_{j=1,\ldots,d} \sum_{i=1}^d \vert a_{i,j}\vert.
 \]
Using that
 \[
   \Vert M_t\Vert_1 = \Vert (T-t)^{-A} (T-t)^A M_t\Vert_1
                   \leq \Vert (T-t)^{-A}\Vert_1 \Vert (T-t)^A M_t\Vert_1,
                   \quad t\in[0,T),
 \]
 we have for sufficiently large $t$,
 \begin{align*}
  &(T-t)^{-1/2-\varepsilon}  \Vert X_t\Vert_1
      = (T-t)^{-1/2-\varepsilon} \Vert (T-t)^A M_t\Vert_1
     \geq (T-t)^{-1/2-\varepsilon} \frac{\Vert M_t\Vert_1}{\Vert (T-t)^{-A}\Vert_1}\\
  &\quad = (T-t)^{-1/2-\varepsilon} \Vert (T-t)^{-A}\Vert_1^{-1} \sum_{i=1}^d \vert M^{(i)}_t\vert\\
  &\quad = \Vert (T-t)^{-A + (1/2+\varepsilon)I_d}\Vert_1^{-1}
       \sum_{i=1}^d \sqrt{2\langle M^{(i)}\rangle_t \ln\ln \langle M^{(i)}\rangle_t }
        \frac{\vert M^{(i)}_t \vert}{\sqrt{2\langle M^{(i)}\rangle_t \ln\ln \langle M^{(i)}\rangle_t}}.
 \end{align*}
If we show that for every $i=1,\ldots,d$,
 \[
    \lim_{t\uparrow T}\Vert (T-t)^{- A + (1/2+\varepsilon)I_d}\Vert_1^{-1} \sqrt{\langle M^{(i)}\rangle_t} =\infty,
 \]
 then \eqref{help9} follows, since $\Vert\cdot\Vert_1$ and $\Vert\cdot\Vert$ are equivalent norms on $\RR^d$,
 by Lemma \ref{MLIL} we have
 \[
    P\left(\limsup_{t\uparrow T} \frac{\vert M^{(i)}_t \vert}{\sqrt{2\langle M^{(i)}\rangle_t \ln\ln \langle M^{(i)}\rangle_t}}
     = 1\right)=1, \quad i=1,\ldots,d,
 \]
and $\lim_{t\uparrow T} \ln\ln \langle M^{(i)}\rangle_t =\infty$ due to Lemma \ref{lemma_quadratic_variation}.
Similarly to the end of the proof of Lemma \ref{lemma_quadratic_variation} (Case 2),
 for every $\varepsilon'\in(0,a-1/2)$, one can choose a $t_0\in[0,T)$ such that
 \begin{align*}
    \langle M^{(i)}\rangle_t
        & \geq K_1 \int_0^{t_0} \Vert (T-s)^{-A^\top} e_i\Vert^2\,\dd s
             + K_1 \int_{t_0}^t (T-s)^{-2a+2\varepsilon'}\,\dd s \\
        & = K_1 \int_0^{t_0} \Vert (T-s)^{-A^\top} e_i\Vert^2\,\dd s
           +K_1\left( \frac{(T-t)^{-2a+2\varepsilon'+1}}{2a-2\varepsilon'-1}
                      - \frac{(T-t_0)^{-2a+2\varepsilon'+1}}{2a-2\varepsilon'-1}
                 \right)\\
       & = K_1 \frac{(T-t)^{-2a+2\varepsilon'+1}}{2a - 2\varepsilon' - 1} + K_2,  \qquad t\in[t_0,T),
 \end{align*}
with some constants $K_1>0$ and $K_2\in\RR$.
If $K_2\geq 0$, then
 \[
    \langle M^{(i)}\rangle_t
       \geq K_1 \frac{(T-t)^{-2a+2\varepsilon'+1}}{2a - 2\varepsilon' - 1},
       \qquad t\in[t_0,T).
 \]
In case of $K_2<0$ there exists some $t_1\in[t_0,T)$ such that
 \[
    K_2 \geq - K_1 \frac{(T-t)^{-2a+2\varepsilon'+1}}{2(2a - 2\varepsilon' - 1)},
    \quad\text{ for every } t\in[t_1,T),
 \]
 due to $\lim_{t\uparrow T}(T-t)^{-2a+2\varepsilon'+1} = \infty$, and hence we have
 \[
    \langle M^{(i)}\rangle_t
     \geq K_1\frac{(T-t)^{-2a+2\varepsilon'+1}}{2(2a - 2\varepsilon' - 1)},
     \qquad t\in[t_1,T).
 \]
Then, by choosing $\varepsilon'\leq \varepsilon/2$, we get
 \begin{align}\label{help14}
  \begin{split}
    &\Vert (T-t)^{- A + (1/2+\varepsilon)I_d}\Vert_1^{-1} \sqrt{\langle M^{(i)}\rangle_t} \\
    &\qquad \geq \sqrt{\frac{K_1}{2(2a-2\varepsilon'-1)}} \Vert (T-t)^{- A + (1/2+\varepsilon + a - \varepsilon' - 1/2)I_d}\Vert_1^{-1},
        \qquad t\in[t_1,T).
  \end{split}
 \end{align}
Since the real parts of the eigenvalues of the matrix
 $-A +(1/2+\varepsilon + a - \varepsilon' - 1/2)I_d$ are equal to $-a + 1/2 + \varepsilon + a - \varepsilon' - 1/2\geq \varepsilon/2 >0$,
 the right-hand side (and hence the left-hand side) of \eqref{help14} converges to $\infty$ as \ $t\uparrow T$.

Note that the above argument for proving \eqref{help9} works also in the case $a=1/2$ for indices $i\in\{1,\ldots,d\}$
 with $\lim_{t\uparrow T} \langle M^{(i)}\rangle_t=\infty$, since as $t\uparrow T$
 \begin{align*}
  \Vert (T-t)^{-A + (1/2+\varepsilon)I_d}\Vert_1^{-1}
    = \big((T-t)^{1/2+\varepsilon} \sup_{x\in\RR^d \,: \,\Vert x\Vert_1=1} \Vert (T-t)^{-A}x\Vert_1\big)^{-1}
   \to \infty,
 \end{align*}
 where we used that $\lim_{s\downarrow 0}\sup_{x\in R} s^{a+\varepsilon} \Vert s^{-A}x\Vert=0$
 for all $\varepsilon>0$ and for all compact subsets $R$ of $\RR^d$ due Corollary 2.2.5 in Meerschaert and Scheffler \cite{MS}.

{\it Case 3:} $a =\frac12$.
Decompose $\{1,\ldots,d\}=\cI_1\cup \cI_2$ in such a way that the (deterministic) quadratic variation process
 $(\langle M^{(i)}\rangle_t)_{t\in[0,T)}$ is bounded for every $i\in \cI_1$ and $\langle M^{(i)}\rangle_t\to\infty$
 as $t\uparrow T$ for every $i\in \cI_2$.

First we prove \eqref{help8}.
By the proof for the case $a>1/2$, we get for sufficiently large $t$,
 \begin{align*}
  &(T-t)^{-1/2 + \varepsilon} \Vert X_t\Vert\\
  &\quad \leq (T-t)^{-1/2 + \varepsilon} \sum_{i\in\cI_1}
      \sup_{\theta\in\RR^d, \Vert\theta\Vert=1} \left( \Vert \theta^\top (T-t)^A\Vert
      \left\vert \left(\frac{\theta^\top (T-t)^A}{\Vert\theta^\top (T-t)^A\Vert}\right)_i \right\vert\right)
      \vert M^{(i)}_t \vert\\
  &\phantom{\quad\leq} + (T-t)^{-1/2 + \varepsilon} \sum_{i\in\cI_2}
      \sup_{\theta\in\RR^d, \Vert\theta\Vert=1} \left( \Vert \theta^\top (T-t)^A\Vert
      \left\vert \left(\frac{\theta^\top (T-t)^A}{\Vert\theta^\top (T-t)^A\Vert}\right)_i \right\vert\right)\\
  &\phantom{\quad+\leq(T-t)^{-1/2 + \varepsilon} \sum_{i=1}^d }\times
      f(\langle M^{(i)}\rangle_t)
      \frac{\vert M^{(i)}_t \vert}{f(\langle M^{(i)}\rangle_t)},
 \end{align*}
 where the function $f$ is defined in the proof for the case $a>1/2$.
Since, as it was noted, the argument for proving \eqref{help8} in the case $a>1/2$
 works also in the case $a=1/2$ for indices $i\in\cI_2$,
 the second sum on the right-hand side above tends to $0$ as $t\uparrow T$ almost surely.
Next we check that the first sum on the right-hand side above also tends to $0$ as $t\uparrow T$ almost surely.
By Lemma \ref{lln1}, $P\left(\lim_{t\uparrow T}M^{(i)}_t\;\text{exists} \right)=1$ for all $i\in\cI_1$, and hence
 to conclude \eqref{help8}, it is enough to check that for all $\varepsilon>0$, the function
 \[
  (T-t)^{-1/2 + \varepsilon}
    \sup_{\theta\in\RR^d,\, \Vert\theta\Vert=1}  \Vert \theta^\top (T-t)^A\Vert ,
    \qquad t\in[0,T),
 \]
 is bounded.
By \eqref{help11}, for $\varepsilon':=\varepsilon/2$, there exists some $t_0\in[0,T)$ such that
 \[
    \sup_{\theta\in\RR^d,\, \Vert\theta\Vert=1}  \Vert \theta^\top (T-t)^A\Vert
       \leq (T-t)^{1/2-\varepsilon'},
       \qquad t\in[t_0,T),
 \]
 yielding that
 \[
  (T-t)^{-1/2 + \varepsilon}
    \sup_{\theta\in\RR^d, \,\Vert\theta\Vert=1}  \Vert \theta^\top (T-t)^A\Vert
       \leq (T-t)^{\varepsilon - \varepsilon'} = (T-t)^{\varepsilon/2},
       \qquad t\in[t_0,T).
 \]
This implies the desired boundedness.

Next we turn to prove \eqref{help9}.
By the proof for the case $a>1/2$, we get for sufficiently large $t$,
 \begin{align*}
  &(T-t)^{-1/2-\varepsilon}  \Vert X_t\Vert_1\\
  &\quad \geq \Vert (T-t)^{-A + (1/2+\varepsilon)I_d}\Vert_1^{-1}
         \sum_{i\in\cI_1}  \vert M^{(i)} \vert\\
  &\phantom{\quad \geq}+ \Vert (T-t)^{-A + (1/2+\varepsilon)I_d}\Vert_1^{-1}
        \sum_{i\in\cI_2} \sqrt{2\langle M^{(i)}\rangle_t \ln\ln \langle M^{(i)}\rangle_t }
        \frac{\vert M^{(i)}_t \vert}{\sqrt{2\langle M^{(i)}\rangle_t \ln\ln \langle M^{(i)}\rangle_t}}.
 \end{align*}
Since, as it was noted, the argument for proving \eqref{help9} in the case $a>1/2$
 works also in the case $a=1/2$ for indices $i\in\cI_2$,
 the $limsup$ (as $t\uparrow T$) of the second sum on the right-hand side above is $\infty$ almost surely.
By Lemma \ref{lln1}, $P\left(\lim_{t\uparrow T}M^{(i)}_t\;\text{exists} \right)=1$ for all $i\in\cI_1$, and hence
 to conclude \eqref{help9}, it is enough to check that
 \[
   \lim_{t\uparrow T} \Vert (T-t)^{-A + (1/2+\varepsilon)I_d}\Vert_1^{-1} = \infty,
 \]
 which was already shown at the end of the proof for the case $a>1/2$.
\end{proof}

\section{Uniqueness in law of operator scaled Wiener bridges}\label{Section_uniqueness}

For $A,\widetilde A\in\rr^{d\times d}$ and $\Sigma\in\rr^{d\times m},\,\widetilde\Sigma\in\rr^{d\times\widetilde m}$, let the processes $(X_t)_{t\in[0,T)}$
 and $(Y_t)_{t\in[0,T)}$ be given by the SDEs
 \begin{align*}
 \dd X_t & =-\,\frac{1}{T-t}A\,X_t\,\dd t+\Sigma\,\dd B_t,\quad t\in[0,T),\\
 \dd Y_t & =-\,\frac{1}{T-t}\widetilde A\,Y_t\,\dd t+\widetilde\Sigma\,\dd\widetilde B_t,\quad t\in[0,T),
 \end{align*}
 with initial conditions $X_0=0$ and $Y_0=0$, where $(B_t)_{t\geq0}$ and $(\widetilde B_t)_{t\geq0}$ are $m$-, respectively
 $\widetilde m$-dimensional standard Wiener process.
Assume that $(X_t)_{t\in[0,T)}$ and $(Y_t)_{t\in[0,T)}$ generate the same law on the space of real-valued continuous functions defined on $[0,T)$.
Since $(X_t)_{t\in[0,T)}$ and $(Y_t)_{t\in[0,T)}$ are centered Gauss processes, their laws coincide if and only if their covariance functions coincide.
Let $(U(t):=\EE(X_tX_t^\top))_{t\in[0,T)}$ and $(V(t):=\EE(Y_tY_t^\top))_{t\in[0,T)}$ be the corresponding covariance functions.
Then, by Problem 5.6.1 in \cite{KS}, we have
 \begin{align}
 U'(t) & =-\,\frac{1}{T-t}\,AU(t)-U(t)A^\top\frac{1}{T-t}+\Sigma\Sigma^\top,\quad t\in[0,T),\label{uprime}\\
 V'(t) & =-\,\frac{1}{T-t}\,\widetilde AV(t)-V(t)\widetilde A^\top\frac{1}{T-t}
              +\widetilde\Sigma\widetilde\Sigma^\top,\quad t\in[0,T).\label{vprime}
 \end{align}
Since $U(t)=V(t)$ for all $t\in[0,T)$, we get
 \[
  -\,\frac{1}{T-t}\,A U(t)-U(t)A^\top\frac{1}{T-t}+\Sigma\Sigma^\top
  =-\,\frac{1}{T-t}\,\widetilde A U(t)-U(t)\widetilde A^\top\frac{1}{T-t}+\widetilde\Sigma\widetilde\Sigma^\top
 \]
 for $t\in[0,T)$.
Since $U(0)=0\in\RR^{d\times d}$, we have $\Sigma\Sigma^\top=\widetilde\Sigma\widetilde\Sigma^\top$,
 and hence
 \[
 (A-\widetilde A)U(t) = - U(t)(A-\widetilde A)^\top,\quad t\in[0,T).
 \]
Unfortunately, this does not imply that $A=\widetilde A$ in general.
Before we construct counterexamples, we will give the solutions of the $\RR^{d\times d}$-valued differential equations
 \eqref{uprime} and \eqref{vprime} with initial condition $U(0)=0$ and $V(0)=0$, respectively.
By Section 5.6.A in \cite{KS}, one easily calculates that
\begin{align}\label{usolve}
U(t)  =\int_0^t\left(\frac{T-t}{T-s}\right)^A\Sigma\Sigma^\top\left(\frac{T-t}{T-s}\right)^{A^\top}\dd s
\end{align}
 for every $t\in[0,T)$.
Analogously, using also that $\Sigma\Sigma^\top=\widetilde\Sigma\widetilde\Sigma^\top$, we have
 \begin{align}
    V(t) = \int_0^t\left(\frac{T-t}{T-s}\right)^{\widetilde A}\Sigma\Sigma^\top\left(\frac{T-t}{T-s}\right)^{\widetilde A^\top}
           \dd s, \quad t\in[0,T).\label{vsolve}
 \end{align}

Next we give examples for bridges associated to the matrices $A$ and $\Sigma$, and $\widetilde A$ and $\Sigma$,
 respectively, such that their laws on the space of real-valued continuous functions on $[0,T)$ coincide, but $A\ne\widetilde A$.

\begin{example}\label{first}
Let $A\in\RR^{d\times d}$ be a normal matrix, i.e.\ $A A^\top = A^\top A$. Choose $\widetilde A=A^\top$ and let $\Sigma=I_d=\widetilde\Sigma$, then for every $r>0$ we have
\begin{equation}\label{normalid}
 r^A\Sigma\Sigma^\top r^{A^\top}=r^{A+A^\top}=r^{\widetilde A+\widetilde A^\top}=r^{\widetilde A}\Sigma\Sigma^\top r^{\widetilde A^\top}.
 \end{equation}
By \eqref{usolve} and \eqref{vsolve} it follows that $U(t)=V(t)$ for all $t\in[0,T)$. 
Using Theorem \ref{Theorem1}, the bridges associated to the matrices $A$ and $\Sigma$, and $\widetilde A$ and $\widetilde\Sigma$
 coincide, but $A\ne\widetilde A$.
Note also that here the eigenvalues of $A$ and $\widetilde A=A^\top$ coincide.

We further wish to give an example, where the eigenvalues of $A$ and $\widetilde A$ do not coincide, but still $U(t)=V(t)$ holds for all $t\in[0,T)$. Choose the normal matrices
\[
A=\Big(\begin{array}{rc} 1 & 1\\ -1 & 1\end{array}\Big)\quad\text{ and }\quad \widetilde A=I_2
\]
together with $\Sigma=I_2=\widetilde\Sigma$, then due to $A+A^\top=2I_2=\widetilde A+\widetilde A^\top$ again \eqref{normalid} holds for every $r>0$, which yields that $U(t)=V(t)$ for all $t\in[0,T)$ as above. Note that now the eigenvalues $1+i$ and $1-i$ of $A$ do not coincide with those of $\widetilde A=I_2$, but the real parts of the eigenvalues do, including their multiplicity.
\end{example}

To conclude, we formulate a partial result on the uniqueness of the scaling matrix.

\begin{prop}\label{Prop_uniqueness}
Let $A$, $\widetilde A\in\RR^{d\times d}$ and $\Sigma\in\RR^{d\times m}$, $\widetilde\Sigma\in\RR^{d\times \widetilde m}$
 be such that $\RSpec(A)\subseteq(0,1/2)$, $\RSpec(\widetilde A)\subseteq(0,1/2)$ and $\Sigma$, $\widetilde\Sigma$
 have full rank $d$ (and consequently $m\geq d$ and $\widetilde m\geq d$).
If the bridges associated to the matrices $A$ and $\Sigma$, and $\widetilde A$ and $\widetilde\Sigma$ induce the same
 law on the space of real-valued continuous functions on $[0,T)$, then $\RSpec(A)=\RSpec(\widetilde A)$.
\end{prop}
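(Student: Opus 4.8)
The plan is to read the distinct real parts of the eigenvalues of $A$ directly off the covariance function, which the coincidence of laws forces to agree with that of $\widetilde A$. As computed above, the laws coincide if and only if $U(t)=V(t)$ for all $t\in[0,T)$, and then $\Sigma\Sigma^\top=\widetilde\Sigma\widetilde\Sigma^\top=:G$, a positive definite $d\times d$ matrix. Writing $r:=T-t$ and using $(\frac{T-t}{T-s})^A=(T-t)^A(T-s)^{-A}$, the representation \eqref{usolve} factorizes as
\[
  U(t)=(T-t)^A\,W(t)\,(T-t)^{A^\top},\qquad W(t):=\int_0^t(T-s)^{-A}G(T-s)^{-A^\top}\,\dd s .
\]
Since $\RSpec(A)\subseteq(0,1/2)$, Lemma \ref{specbound} gives $\int_0^T\|(T-s)^{-A}\|^2\,\dd s<\infty$, so $W(t)\to W_T:=\int_0^T(T-s)^{-A}G(T-s)^{-A^\top}\,\dd s$ as $t\uparrow T$, and $W_T$ is positive definite because $G$ is. The same holds for $V$ with $\widetilde A$ in place of $A$ and a positive definite limit $\widetilde W_T$.

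The analytic core is a single rate lemma: for any $M\in\RR^{d\times d}$ and any family $\Phi(r)$ of symmetric positive definite matrices with $\Phi(r)\to\Phi_0$ positive definite as $r\downarrow0$, the largest eigenvalue $\lambda_{\max}$ of $r^M\Phi(r)r^{M^\top}$ satisfies $\lim_{r\downarrow0}\log\lambda_{\max}/\log r=2\min\RSpec(M)$. For the upper bound I would use $\lambda_{\max}\leq\|r^M\|^2\|\Phi(r)\|$ together with $\|r^M\|\leq K\,r^{\min\RSpec(M)-\varepsilon}$, which follows exactly as in Lemma \ref{specbound} applied blockwise to the spectral decomposition of $M$ (for small $r$ the block of smallest real part dominates). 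For the matching lower bound I would use $\lambda_{\max}\geq c_0\|r^{M^\top}\|^2=c_0\|r^M\|^2$, where $c_0>0$ bounds the smallest eigenvalue of $\Phi(r)$ from below, combined with the clean estimate $\|r^M\|\geq r^{\min\RSpec(M)}$: restricting to the spectral block $M_1\in\RR^{\ell\times\ell}$ of minimal real part $\rho:=\min\RSpec(M)$, one has $\operatorname{tr}(M_1)=\rho\,\ell$ (imaginary parts cancel in conjugate pairs), hence $\det(r^{M_1})=r^{\rho\ell}$, so the largest singular value of $r^{M_1}$ is at least its geometric mean $r^{\rho}$, giving $\|r^M\|\geq\|r^{M_1}\|\geq r^{\rho}$.

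To extract the \emph{whole} multiset of real parts, not merely its minimum, I would apply this rate lemma to the $k$-th exterior powers. Using $\wedge^k(r^M)=r^{D_k(M)}$, where $D_k(M)$ is the $k$-th additive compound matrix whose eigenvalues are the $k$-fold sums $\lambda_{i_1}+\cdots+\lambda_{i_k}$, together with multiplicativity $\wedge^k(r^A W(t)r^{A^\top})=r^{D_k(A)}\,\wedge^k W(t)\,r^{D_k(A)^\top}$ and $\wedge^k W(t)\to\wedge^k W_T$ (still positive definite), the rate lemma yields
\[
  \lim_{t\uparrow T}\frac{\log\Lambda_k(t)}{\log(T-t)}=2\,\min\RSpec(D_k(A))=2\,(b_1+\cdots+b_k),
\]
where $\Lambda_k(t)$ is the largest eigenvalue of $\wedge^k U(t)$, i.e.\ the product $\mu_1(t)\cdots\mu_k(t)$ of the $k$ largest eigenvalues $\mu_1\geq\cdots\geq\mu_d$ of $U(t)$, and $b_1\leq\cdots\leq b_d$ lists the real parts of the eigenvalues of $A$ repeated according to multiplicity. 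Note that $D_k(A)$ may have real parts outside $(0,1/2)$, but this is irrelevant: convergence of $\wedge^k W(t)$ is inherited from that of $W(t)$, and the rate lemma needs only the factored form with a convergent positive definite middle factor.

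Finally, since $U(t)=V(t)$ forces $\Lambda_k(t)$ to be literally the same function of $t$ for $A$ and for $\widetilde A$, uniqueness of the limit above shows that the partial sums $b_1+\cdots+b_k$ coincide for $A$ and $\widetilde A$ for every $k=1,\ldots,d$; taking successive differences, every $b_k$ coincides, so $A$ and $\widetilde A$ have the same real parts with the same multiplicities, and in particular $\RSpec(A)=\RSpec(\widetilde A)$. I expect the main obstacle to be the rate lemma itself --- pinning down both bounds of $\lambda_{\max}$ sharply, especially the determinant-of-the-minimal-block trick for the lower bound --- and the bookkeeping verifying that the compound construction preserves the structural form $r^{M}\Phi(r)r^{M^\top}$ with $\Phi(r)$ converging to a positive definite matrix.
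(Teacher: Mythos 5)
Your proposal is correct, but it follows a genuinely different route from the paper. The paper disposes of Proposition \ref{Prop_uniqueness} in one line, as an immediate consequence of the almost sure sample-path rates \eqref{help8} and \eqref{help10}: under $\RSpec(A)\subseteq(0,1/2)$ each spectral component satisfies $(T-t)^{-a_j+\varepsilon}\Vert X_t^{[j]}\Vert\to0$ and $(T-t)^{-a_j-\varepsilon}\Vert X_t^{[j]}\Vert\to\infty$ almost surely, so the exponents $a_j$ are determined by the law of the process; this rests on the whole martingale machinery of Section 4 (SLLN and LIL for $M$). Your argument never touches the sample paths: it reads the spectrum off the covariance function \eqref{usolve} via the factorization $U(t)=(T-t)^A W(t)(T-t)^{A^\top}$ with $W(t)$ converging to a positive definite limit (this is exactly where the hypotheses $\RSpec(A)\subseteq(0,1/2)$ and full rank of $\Sigma$ enter, mirroring their role in the paper), then applies a two-sided rate lemma for $\lambda_{\max}\bigl(r^M\Phi(r)r^{M^\top}\bigr)$ — where your determinant/geometric-mean bound $\Vert r^{M_1}\Vert\geq r^{\rho}$ is a clean elementary substitute for the Meerschaert--Scheffler lower estimates the paper invokes — and finally uses exterior powers and additive compounds to capture the partial sums $b_1+\cdots+b_k$. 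What your route buys: a purely deterministic, linear-algebraic proof at the covariance level, an explicit identification of the exponents as functionals of the law (which the paper's one-line proof leaves implicit in \eqref{help8} and \eqref{help10}), and a strictly stronger conclusion, namely that the real parts of the eigenvalues of $A$ and $\widetilde A$ agree as a multiset with multiplicities, not merely that $\RSpec(A)=\RSpec(\widetilde A)$ as sets. What the paper's route buys is brevity, since Section 4 is already in place. Two small points to polish in your write-up: the inequality $\Vert r^M\Vert\geq\Vert r^{M_1}\Vert$ holds in the adapted inner product of Section \ref{sebsection_spectral}, hence only up to a change-of-basis constant in the Euclidean norm, which is harmless for the logarithmic rate; and the rate lemma should record explicitly that $\Phi(r)$ is symmetric and eventually uniformly positive definite (both immediate from $\Phi(r)\to\Phi_0$ with $\Phi_0$ positive definite).
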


\begin{proof}
The assertion is an immediate consequence of \eqref{help8} and \eqref{help10}.
\end{proof}

\begin{remark}
We conjecture that Proposition \ref{Prop_uniqueness} also holds in the situation $\RSpec(A)\subseteq(0,\infty)$,
 $\RSpec(\widetilde A)\subseteq(0,\infty)$ but we were not able to give a rigorous proof.
\end{remark}

\section*{Acknowledgements}
We are grateful to the referee for several valuable comments that have led to an improvement of the manuscript.

\bibliographystyle{plain}

\end{document}